\numberwithin{equation}{section}
\DeclareMathOperator*{\esssup}{ess\,sup}
\providecommand{\U}[1]{\protect \rule{.1in}{.1in}}
\newtheorem{theorem}{Theorem}[section]
\newtheorem{corollary}[theorem]{Corollary}
\newtheorem{definition}[theorem]{Definition}
\newtheorem{lemma}[theorem]{Lemma}
\newtheorem{remark}[theorem]{Remark}
\theoremstyle{empty}
\newenvironment{proof}[1][Proof]{\noindent \textbf{#1.} }{\  \rule{0.5em}{0.5em}}
\begin{document}
\title{Non-homogeneous stochastic linear-quadratic optimal control problems with
multi-dimensional state and regime switching}
\author{Yuyang Chen\thanks{School of Mathematical Sciences, Shanghai Jiao Tong University, China (cyy0032@sjtu.edu.cn)}
\and
Peng Luo \thanks{School of Mathematical Sciences, Shanghai Jiao Tong University, China (peng.luo@sjtu.edu.cn). Financial support
from the National Natural Science Foundation of China (Grant No. 12101400) is gratefully acknowledged.}}

\maketitle
\begin{abstract}
In this paper, we study non-homogeneous stochastic linear-quadratic (LQ) optimal control problems with multi-dimensional state and regime switching. We focus on the corresponding stochastic Riccati equation, which is the same as that one in homogeneous stochastic LQ optimal control problem, and the adjoint backward stochastic differential equation (BSDE), which arises from the non-homogeneous terms in the state equation and cost functional. Both stochastic Riccati equation and adjoint BSDE are solved by the contraction mapping method, and are used to represent the closed-loop optimal control and the optimal value of our problems. 
\end{abstract}

\textbf{Key words}: stochastic LQ optimal control,  regime switching, stochastic Riccati equation.

\textbf{MSC-classification}: 93H20, 60H30.
\section{Introduction}
LQ optimal control problem is a widely researched topic and regarded as one of the most classical control problems. Wonham studied the stochastic LQ optimal control problem with deterministic coefficients in \cite{Wonham 1967} and discussed the corresponding Riccati equation in detail in \cite{Wonham 1968}. Bismut \cite{Bismut 1976} first focused on the stochastic Riccati equation but was only able to solve some special cases. Peng \cite{Peng 1992} used Bellman's principle and Wonham's method to obtain the existence and uniqueness results for a more general stochastic Riccati equation. After extensive research (see \cite{Kohlmann and Tang 2001, Kohlmann and Tang 2002, Kohlmann and Tang 2003a, Kohlmann and Tang 2003b}), Tang \cite{Tang 2003} effectively resolved the most comprehensive stochastic Riccati equation. We refer to \cite{Sun et al. 2016, Lü 2019, Sun et al. 2021} for further advancements in stochastic LQ optimal control problems and stochastic Riccati equations.

In engineering, management and economics problems, different states often require the establishment of different equations, leading to the emergence of the LQ problem with Poisson jumps. Wu and Wang \cite{Wu and Wang 2003} proposed the stochastic LQ problem whose state equation is driven by both Brownian motion and Poisson process, and solved the related Riccati equation whose coefficients partially degenerates to zero. Hu and {\O}ksendal \cite{Hu and Oksendal 2008} first studied the non-homogeneous stochastic LQ problem with Poisson jumps although they didn't solve the Riccati equations. Yu \cite{Yu 2017} obtained the existence and uniqueness result for forward-backward stochastic equations (FBSDEs) with Poisson jumps under monotone condition, and applied it to backward stochastic LQ problems. Li et al. \cite{Li et al. 2018} discussed the stochastic LQ problem with Poisson processes under the indefinite case.

Another form of state switching under varying equations is referred to as regime switching. Li and Zhou \cite{Li and Zhou 2002} studied a stochastic LQ problem in the finite time horizon with Markovian jumps in parameter values while Li et al. \cite{Li et al. 2003} discussed the case in the infinite time horizon. Liu et al. \cite{Liu et al. 2005} examined the near-optimal controls of regime switching LQ problems that come with indefinite control weight costs. Recently, there have been further breakthroughs concerning this issue. For example, Zhang et al. \cite{Zhang et al. 2021} established both open-loop and closed-loop solvability for stochastic LQ optimal control problem of Markovian regime switching system with deterministic coefficients. Hu et al. \cite{Hu et al. 2022a} studied the stochastic LQ control problem with regime switching, random coefficients and cone control constraint in which the state equation is one-dimensional. Wen et al. \cite{Wen et al. 2023}, Chen and Luo \cite{Chen and Luo 2023} used different methods to solve the multi-dimensional stochastic LQ optimal control problems with regime switching and the corresponding matrix-valued Riccati equations. The LQ problem with regime switching has numerous applications in the field of mathematical finance, especially mean-variance portfolio selection problems and asset-liability management, see \cite{Zhou and Yin 2003, Yin and Zhou 2004, Xie 2009, Shen et al. 2020, Hu et al. 2022b}.

In this paper, we are interested in solving non-homogeneous stochastic LQ optimal control problems with multi-dimensional state and regime switching. Our first contribution is to solve the matrix-valued stochastic Riccati equation, which is also discussed in the homogeneous problem \cite{Chen and Luo 2023}. In our previous work \cite{Chen and Luo 2023}, we constructed a Piccard iteration scheme and was only able to obtain the existence of the solution for the Riccati equation, while through this article we use the contraction mapping method to solve the Riccati equation. Our second contribution is to solve the adjoint BSDE, which comes from the non-homogeneous terms in the state equation and cost functional. The coefficients of this BSDE are all unbounded as well as Y is multi-dimensional, which makes measure transformation infeasible. Our last main contribution is to show the closed-loop solution and the optimal value for the non-homogeneous stochastic LQ problem, which are represented by the solution of the stochastic Riccati equation and the adjoint BSDE.

The rest of this paper is organized as follows. In Section 2, we formulate the multi-dimensional stochastic LQ optimal control problems with regime switching and give our three main results. The proofs of these theorems are presented sequentially in Section 3.

\section{Formulation of the problem and main results}
Let $(\Omega, \mathcal{F}, \mathbb{P})$ be a fixed complete probability space on which are defined a standard one-dimensional Brownian motion $W=\{W(t);0\leq t<\infty\}$ and a continuous-time stationary Markov chain $\alpha_{t}$ valued in a finite state space $\mathcal{M}=\{1,2, \ldots, \ell\}$ with $\ell>1$. We assume $W(t)$ and $\alpha_{t}$ are independent processes. The Markov chain has a generator $Q=\left(q_{i j}\right)_{\ell \times \ell}$ with $q_{i j} \geq 0$ for $i \neq j$ and $\sum_{j=1}^{\ell} q_{i j}=0$ for every $i \in \mathcal{M}$. Define the filtrations $\mathcal{F}_{t}=\sigma\left\{W(s), \alpha_{s}: 0 \leq s \leq t\right\} \vee \mathcal{N}$ and $\mathcal{F}_{t}^{W}=\sigma\{W(s): 0 \leq s \leq t\} \vee \mathcal{N}$, where $\mathcal{N}$ is the totality of all the $\mathbb{P}$-null sets of $\mathcal{F}$. For a random variable $\eta$, $\|\eta\|_{\infty}$ denotes the $L^{\infty}$-norm of $\eta$, i.e., $\|\eta\|_{\infty}:=\mathop{\esssup}\limits_{\omega}|\eta(\omega)|$. Equalities and inequalities between random variables and processes are understood in the $P$-a.s. and $P\otimes dt$-a.e. sense, respectively.

We use the following notation throughout the paper:
\begin{align*}
&\mathbb{R}^{n}:\text{ the }n\text{-dimensional Euclidean space with the Euclidean norm }|\cdot|;\\
&\mathbb{R}^{m\times n}:\text{ the Euclidean space of all }(m\times n)\text{ real matrices};\\
&\mathbb{S}^{n}:\text{ the space of all symmetric }(n\times n)\text{ real matrices};\\
&I_{n}:\text{ the identity matrix of size }n;\\
&M^{\top}:\text{ the transpose of a matrix }M;\\
&tr(M):\text{ the trace of a matrix }M;\\
&\langle\cdot,\cdot\rangle:\text{ the Frobenius inner product on }\mathbb{R}^{n\times m},\text{ which is defined by }\langle A,B\rangle=tr(A^{\top}B);\\
&|M|:\text{ the Frobenius norm of a matrix }M,\text{ defined by }\left(tr(MM^{\top})\right)^{\frac{1}{2}};
\end{align*}
Furthermore, we introduce the following spaces of random processes: for Euclidean space $\mathbb{H}=\mathbb{R}^{n},\mathbb{R}^{n\times m},\mathbb{S}^{n}$, $p\geq2$ and all $\{\mathcal{F}_{t}\}_{t\geq0}$-stopping time $\tau\leq T$,
\begin{align*}
&L_{\mathcal{F}}^{\infty}(\Omega;\mathbb{H})=\left\{\xi:\Omega\rightarrow\mathbb{H}\mid\xi\text{ is }\mathcal{F}_{T}\text{-measurable, and essentially bounded }\right\};\\
&\begin{aligned}\mathcal{H}_{\mathcal{F}}^{p}(0,T;\mathbb{H})=
  &\Bigg\{\phi:[0,T]\times \Omega\rightarrow\mathbb{H}\mid\phi(\cdot)\text{ is an }\left\{\mathcal{F}_{t}\right\}_{t\geq0}\text{-adapted process}\\
  &\text{ with }\mathbb{E}\left(\int_{0}^{T}|\phi(t)|dt\right)^{p}<\infty\Bigg\};
  \end{aligned}\\
&\begin{aligned}L_{\mathcal{F}}^{p}(0,T;\mathbb{H})=
&\Bigg\{\phi:[0,T]\times \Omega\rightarrow\mathbb{H}\mid\phi(\cdot)\text{ is an }\left\{\mathcal{F}_{t}\right\}_{t\geq0}\text{-adapted process}\\
&\text{ with }\mathbb{E}\left(\int_{0}^{T}|\phi(t)|^{2}dt\right)^{\frac{p}{2}}<\infty\Bigg\};
\end{aligned}\\
&\begin{aligned}L_{\mathcal{F}}^{p}(\Omega;C(0,T;\mathbb{H}))=&\Bigg\{\phi:[0,T]\times\Omega\rightarrow\mathbb{H}\mid\phi(\cdot)\text{ is an }\left\{\mathcal{F}_{t}\right\}_{t \geq 0}\text{-adapted process}\\
&\text{ and continuous with }\mathbb{E}\left(\sup_{t\in[0,T]}|\phi(t)|^{p}\right)<\infty\Bigg\};\end{aligned}\\
&\begin{aligned}L_{\mathcal{F}}^{\infty}(0,T;\mathbb{H})=&\Bigg\{\phi:[0,T]\times\Omega\rightarrow\mathbb{H}\mid\phi(\cdot)\text{ is an }\left\{\mathcal{F}_{t}\right\}_{t\geq 0}\text{-adapted essentially bounded process}\Bigg\}.\end{aligned}\\
&\begin{aligned}L_{\mathcal{F}}^{2,\mathrm{bmo}}(0,T;\mathbb{H})=&\Bigg\{\phi:[0,T]\times\Omega\rightarrow\mathbb{H}\mid\phi(\cdot)\text{ is an }\mathcal{F}\text{-progressively measurable process}\\
&\text{ with }\sup_{0\leq\tau\leq T}\left\|\mathbb{E}\left[\int_{\tau}^{T}|\phi(s)|^{2}ds\mid\mathcal{F}_{\tau}\right]\right\|^{\frac{1}{2}}_{\infty}<\infty\Bigg\};\end{aligned}
\end{align*}
$L_{\mathcal{F}^W}^{\infty}(\Omega;\mathbb{H})$, $\mathcal{H}_{\mathcal{F}^W}^{p}(0,T;\mathbb{H})$, $L_{\mathcal{F}^W}^{p}(0,T;\mathbb{H})$, $L_{\mathcal{F}^W}^{p}(\Omega;C(0,T;\mathbb{H}))$, $L_{\mathcal{F}^W}^{\infty}(0,T;\mathbb{H})$ and $L_{\mathcal{F}^{W}}^{2,\mathrm{bmo}}(0,T;\mathbb{H})$ are defined in a same manner by replacing $\mathcal{F}$ by $\mathcal{F}^{W}$.

We now introduce the non-homogeneous multi-dimensional stochastic LQ optimal problem with regime switching. Consider the following $n$-dimensional controlled linear stochastic differential equation on the finite time interval $[0,T]$:
\begin{equation}\label{state}
\left\{\begin{array}{l}
\begin{aligned}
dX(t)=&\left[A\left(t,\alpha_{t}\right)X(t)+B\left(t,\alpha_{t}\right)u(t)+b\left(t,\alpha_{t}\right)\right]dt\\
&+\left[C\left(t,\alpha_{t}\right)X(t)+D\left(t,\alpha_{t}\right)u(t)+\sigma\left(t,\alpha_{t}\right)\right]dW(t),\quad t\in[0,T],\end{aligned}\\
X(0)=x, \alpha_{0}=i_{0}
\end{array}\right.
\end{equation}
where $A(t,\omega,i),B(t,\omega,i),C(t,\omega,i),D(t,\omega,i)$ are all $\left\{\mathcal{F}^{W}_{t}\right\}_{t\geq 0}$-adapted processes of suitable sizes for $i \in \mathcal{M}$ and $x\in\mathbb{R}^{n}$ is an initial state, $i_{0}\in\mathcal{M}$ is an initial regime. The solution $X=\{X(t);0\leq t\leq T\}$ of \eqref{state}, valued in $\mathbb{R}^{n}$, is called a state process; the process $u=\{u(t);0\leq t\leq T\}$ of \eqref{state}, valued in $\mathbb{R}^{m}$, is called a control which influences the state $X$, and is taken from the space $\mathcal{U}[0,T]:=L_{\mathcal{F}}^{2}(0,T;\mathbb{R}^{m})$.

In order to measure the performance of control $u(\cdot)$, we introduce the following quadratic cost functional:
\begin{equation}\label{cost}
\begin{aligned}
J\left(x, i_{0}, u(\cdot)\right):=\mathbb{E}&\left[\int_{0}^{T}\big(\langle Q(t,\alpha_{t})\left(X(t)-q(t,\alpha_{t})\right), X(t)-q(t,\alpha_{t})\rangle+\langle R(t,\alpha_{t})\left(u(t)-r(t,\alpha_{t})\right),\right.\\
&\qquad\qquad u(t)-r(t,\alpha_{t})\rangle\big)dt\left.+\left\langle G(\alpha_{T})\left(X(T)-g(\alpha_{T})\right), X(T)-g(\alpha_{T})\right\rangle\right].
\end{aligned}
\end{equation}

For state equation \eqref{state} and cost functional \eqref{cost}, we introduce the following assumption:
\begin{flushleft}
$(\mathscr{A}1)$ For all $i \in \mathcal{M}$,
\begin{align*}
\left\{\begin{array}{ll}
A(t,\omega,i),~C(t,\omega,i)\in L_{\mathcal{F}^{W}}^{\infty}(0,T;\mathbb{R}^{n\times n
}),\\
B(t,\omega,i),~D(t,\omega,i)\in L_{\mathcal{F}^{W}}^{\infty}(0,T;\mathbb{R}^{n\times m}),\\
Q(t,\omega,i)\in L_{\mathcal{F}^{W}}^{\infty}(0,T;\mathbb{S}^{n}),\\
R(t,\omega,i)\in L_{\mathcal{F}^{W}}^{\infty}(0,T;\mathbb{S}^{m}),\\
G(\omega,i)\in L_{\mathcal{F}^{W}}^{\infty}(\Omega;\mathbb{S}^{n}),\\
b(t,\omega,i),~\sigma(t,\omega,i),~q(t,\omega,i)\in L_{\mathcal{F}^{W}}^{\infty}(0,T;\mathbb{R}^{n
}),\\
r(t,\omega,i)\in L_{\mathcal{F}^{W}}^{\infty}(0,T;\mathbb{R}^{m
}),\\
g(\omega,i)\in L_{\mathcal{F}^{W}}^{\infty}(\Omega;\mathbb{R}^{n})
\end{array}\right.
\end{align*}
\end{flushleft}
Under condition $(\mathscr{A}1)$, for any initial state $x$ and any control $u(\cdot)\in\mathcal{U}[0,T]$, standard SDE theory shows that equation \eqref{state} has a unique solution $X(\cdot)\in L_{\mathcal{F}}^{2}(\Omega;C(0,T;\mathbb{R}^{n}))$. We call such $(X(\cdot), u(\cdot))$ an admissible pair.

Then the following problem, called stochastic linear-quadratic optimal control problem with regime switching, can be formulated.\\
\\
$\mathbf{Problem~(SLQ)}$. For any initial pair $\left(x, i_{0}\right)\in\mathbb{R}^{n}\times\mathcal{M}$, find a control $u^{*}\in\mathcal{U}[0,T]$ such that
\begin{equation}\label{SLQ}
J\left(x,i_{0},u^{*}\right)=\inf_{u\in\mathcal{U}[0,T]}J\left(x,i_{0},u\right)\equiv V\left(x,i_{0}\right).
\end{equation}
Any element $u^{*}\in\mathcal{U}[0,T]$ satisfying \eqref{SLQ} is called an optimal control of Problem (SLQ) corresponding to the initial pair $\left(x, i_{0}\right)\in\mathbb{R}^{n}\times\mathcal{M}$, the corresponding state process $X^{*}(\cdot)\equiv X(\cdot;u^{*})$ is called an optimal state process. We also call $V\left(x,i_{0}\right)$ the value function of Problem (SLQ). Our objective is to solve Problem (SLQ). In the classical LQ problem, it is commonly observed that the positive definite coefficients in \eqref{cost} serve as a sufficient condition for the solvability of the Riccati equation and subsequently the LQ problem, as illustrated in \cite{Tang 2003}. Therefore, we also introduce the following assumption.
\begin{flushleft}
$(\mathscr{A}2)$ For all $i \in \mathcal{M},t\in[0,T]$ and some $\lambda>0$,
\begin{align*}
R(t,i)\geq\lambda I_{m},\quad Q(t,i)\geq0,\quad G(i)\geq0.
\end{align*}
\end{flushleft}

\begin{remark}
When $R(\cdot,i)>0, Q(\cdot,i)-S(\cdot,i)^{\top}R(\cdot,i)^{-1}S(\cdot,i)\geq0,i\in\mathcal{M}$, the SLQ problem with the state equation
\begin{equation*}
\left\{\begin{array}{l}
\begin{aligned}
dX(t)=&\left[A\left(t,\alpha_{t}\right)X(t)+B\left(t,\alpha_{t}\right)u(t)+b\left(t,\alpha_{t}\right)\right]dt\\
&+\left[C\left(t,\alpha_{t}\right)X(t)+D\left(t,\alpha_{t}\right)u(t)+\sigma\left(t,\alpha_{t}\right)\right]dW(t),\quad t\in[0,T],\end{aligned}\\
X(0)=x, \alpha_{0}=i_{0}
\end{array}\right.
\end{equation*}
and the cost functional
\begin{align*}
J_{1}\left(x,i_{0},u(\cdot)\right)=&E\bigg[\langle G(\alpha_{T})\left(X(T)-g(\alpha_{T})\right), X(T)-g(\alpha_{T})\rangle\\
&\quad+\int_{0}^{T}\left\langle\left(\begin{array}{cc}
Q(t,\alpha_{t}) & S(t,\alpha_{t})^{\top} \\
S(t,\alpha_{t}) & R(t,\alpha_{t})
\end{array}\right)\left(\begin{array}{c}
X(t)-q(t,\alpha_{t})\\
u(t)-r(t,\alpha_{t})
\end{array}\right),\left(\begin{array}{c}
X(t)-q(t,\alpha_{t})\\
u(t)-r(t,\alpha_{t})
\end{array}\right)\right\rangle dt\bigg]
\end{align*}
is equivalent to another one with
\begin{equation*}
\left\{\begin{array}{l}
\begin{aligned}
dX(t)=&\left[\widetilde{A}\left(t,\alpha_{t}\right)X(t)+B\left(t,\alpha_{t}\right)\widetilde{u}(t)+b\left(t,\alpha_{t}\right)\right]dt\\
&+\left[\widetilde{C}\left(t,\alpha_{t}\right)X(t)+D\left(t,\alpha_{t}\right)\widetilde{u}(t)+\sigma\left(t,\alpha_{t}\right)\right]dW(t),\quad t\in[0,T],\end{aligned}\\
X(0)=x, \alpha_{0}=i_{0}
\end{array}\right.
\end{equation*}
and
\begin{align*}
J_{2}\left(x,i_{0},\widetilde{u}(\cdot)\right)=&E\bigg[\langle G(\alpha_{T})\left(X(T)-g(\alpha_{T})\right), X(T)-g(\alpha_{T})\rangle\\
&\quad+\int_{0}^{T}\left\langle\left(\begin{array}{cc}
\widetilde{Q}(t,\alpha_{t}) & 0 \\
0 & R(t,\alpha_{t})
\end{array}\right)\left(\begin{array}{c}
X(t)-q(t,\alpha_{t})\\
\widetilde{u}(t)-\widetilde{r}(t,\alpha_{t})
\end{array}\right),\left(\begin{array}{c}
X(t)-q(t,\alpha_{t})\\
\widetilde{u}(t)-\widetilde{r}(t,\alpha_{t})
\end{array}\right)\right\rangle dt\bigg]
\end{align*}
where
\begin{align*}
&\widetilde{A}(t,\alpha_{t})=A(t,\alpha_{t})-B(t,\alpha_{t})R(t,\alpha_{t})^{-1}S(t,\alpha_{t}),\quad\widetilde{C}(t,\alpha_{t})=C(t,\alpha_{t})-D(t,\alpha_{t})R(t,\alpha_{t})^{-1}S(t,\alpha_{t}),\\
&\widetilde{r}(t,\alpha_{t})=r(t,\alpha_{t})+R(t,\alpha_{t})^{-1}S(t,\alpha_{t})q(t,\alpha_{t}),\\
&\widetilde{Q}(t,\alpha_{t})=Q(t,\alpha_{t})-S(t,\alpha_{t})^{\top}R(t,\alpha_{t})^{-1}S(t,\alpha_{t}),\quad\widetilde{u}(t)=u(t)+R(t,\alpha_{t})^{-1}S(t,\alpha_{t})X(t),
\end{align*}
therefore we only need to consider $S(\cdot,i)=0,i\in\mathcal{M}$ in this paper.
\end{remark}

\subsection{Stochastic Riccati equation and adjoint BSDE}
Inspired by the classic methods on non-homogeneous LQ optimal control problems, we consider the following $n\times n$-dimensional stochastic Riccati equation (SRE for short):
\begin{equation}\label{Riccati}
\left\{
\begin{array}{l}
\begin{aligned}
dP(t,i)=&-\left[P(t,i)A(t,i)+A(t,i)^{\top}P(t,i)+C(t,i)^{\top}P(t,i)C(t,i)+\Lambda(t,i) C(t,i)\right.\\
&\quad+C(t,i)^{\top}\Lambda(t,i)+Q(t,i)+\textstyle\sum_{j=1}^{l}q_{ij}P(t,j)-\left(P(t,i)B(t,i)+C(t,i)^{\top}P(t,i)D(t,i)\right.\\
&\quad\left.+\Lambda(t,i)D(t,i)\right)\left(R(t,i)+D(t,i)^{\top}P(t,i)D(t,i)\right)^{-1}\left(B(t,i)^{\top}P(t,i)\right.\\
&\quad\left.\left.+D(t,i)^{\top}P(t,i)C(t,i)+D(t,i)^{\top}\Lambda(t,i)\right)\right]dt+\Lambda(t,i) dW(t),
\end{aligned}\\
R(t,i)+D(t,i)^{\top}P(t,i)D(t,i)>0,\quad t\in[0,T],\\
P(T,i)=G(i),\quad i\in\mathcal{M},
\end{array}
\right.
\end{equation}
and $n$-dimensional adjoint BSDE:
\begin{equation}\label{K}
\left\{
\begin{array}{l}
\begin{aligned}
dK(t,i)=-&\left[\big(A(t,i)-B(t,i)\Gamma(t,i)\big)^{\top}K(t,i)+\big(C(t,i)-D(t,i)\Gamma(t,i)\big)^{\top}L(t,i)\right.\\
&+\Gamma(t,i)^{\top}\big(D(t,i)^{\top}P(t,i)\sigma(t,i)-R(t,i)r(t,i)\big)+Q(t,i)q(t,i)-P(t,i)b(t,i)\\
&\left.-\big(C(t,i)^{\top}P(t,i)+\Lambda(t,i)\big)\sigma(t,i)+\textstyle\sum_{j=1}^{l}q_{ij}K(t,j)\right]dt+L(t,i)dW(t),\quad t\in[0,T],
\end{aligned}\\
K(T,i)=G(i)g(i),\quad i\in\mathcal{M},
\end{array}
\right.
\end{equation}
where
\begin{align*}
\Gamma(t,i)=\big(R(t,i)+D(t,i)^{\top}P(t,i)D(t,i)\big)^{-1}\big(B(t,i)^{\top}P(t,i)+D(t,i)^{\top}P(t,i)C(t,i)+D(t,i)^{\top}\Lambda(t,i)\big).
\end{align*}

\begin{definition}
A vector process $(P(\cdot,i),\Lambda(\cdot,i))_{i=1}^{\ell}$ is called a solution of SRE \eqref{Riccati}, if it satisfies \eqref{Riccati}, and $(P(\cdot,i),\Lambda(\cdot,i))\in L_{\mathcal{F}^{W}}^{\infty}(\Omega;C(0,T;\mathbb{S}^{n}))\times L_{\mathcal{F}^{W}}^{2,\mathrm{bmo}}(0,T;\mathbb{S}^{n})$ for all $i\in\mathcal{M}$.
\end{definition}

\begin{definition}
A vector process $(K(\cdot,i),L(\cdot,i))_{i=1}^{\ell}$ is called a solution of BSDE \eqref{K}, if it satisfies \eqref{Riccati}, and $(K(\cdot,i),L(\cdot,i))\in L_{\mathcal{F}^{W}}^{\infty}(\Omega;C(0,T;\mathbb{R}^{n}))\times L_{\mathcal{F}^{W}}^{2,\mathrm{bmo}}(0,T;\mathbb{R}^{n})$ for all $i\in\mathcal{M}$.
\end{definition}

Before introducing our main results, we recall the following lemma from standard matrix analysis, which is a direct consequence of \cite[Theorem 7.4.1.1]{Horn and Johnson 2012}. We will use this lemma throughout this paper.

\begin{lemma}\label{le-a}
Let $\mathbf{A},\mathbf{B}\in\mathbb{S}^{n}$ with $\mathbf{B}$ being positive semi-definite. Then with $\lambda_{max}(\mathbf{A})$ denoting the largest eigenvalue of $\mathbf{A}$, we have
\begin{align*}
tr(\mathbf{A}\mathbf{B})\leq\lambda_{max}(\mathbf{A})\cdot tr(\mathbf{B}).
\end{align*}
\end{lemma}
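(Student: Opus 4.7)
The plan is to reduce the inequality to a weighted sum of Rayleigh quotients of $\mathbf{A}$ via the spectral decomposition of $\mathbf{B}$. Since $\mathbf{B}\in\mathbb{S}^{n}$ is positive semi-definite, the spectral theorem gives an orthonormal basis $\{v_{1},\dots,v_{n}\}$ of $\mathbb{R}^{n}$ and non-negative eigenvalues $\mu_{1},\dots,\mu_{n}\geq 0$ with $\mathbf{B}=\sum_{i=1}^{n}\mu_{i}v_{i}v_{i}^{\top}$. This is where the positive semi-definiteness of $\mathbf{B}$ enters: it guarantees that the weights $\mu_{i}$ are non-negative, which is essential for the inequality to go in the correct direction.

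Next, I would compute $tr(\mathbf{A}\mathbf{B})$ by plugging in the decomposition and using the cyclic and linearity properties of the trace. Concretely,
\begin{align*}
tr(\mathbf{A}\mathbf{B})=\sum_{i=1}^{n}\mu_{i}\,tr\bigl(\mathbf{A}v_{i}v_{i}^{\top}\bigr)=\sum_{i=1}^{n}\mu_{i}\,v_{i}^{\top}\mathbf{A}v_{i}.
\end{align*}
Each summand $v_{i}^{\top}\mathbf{A}v_{i}$ is a Rayleigh quotient of $\mathbf{A}$ at the unit vector $v_{i}$, so the Courant--Fischer characterization (or equivalently, the spectral decomposition of the symmetric matrix $\mathbf{A}$) yields $v_{i}^{\top}\mathbf{A}v_{i}\leq \lambda_{max}(\mathbf{A})\,|v_{i}|^{2}=\lambda_{max}(\mathbf{A})$.

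Since $\mu_{i}\geq 0$, multiplying and summing gives
\begin{align*}
tr(\mathbf{A}\mathbf{B})\leq \lambda_{max}(\mathbf{A})\sum_{i=1}^{n}\mu_{i}=\lambda_{max}(\mathbf{A})\cdot tr(\mathbf{B}),
\end{align*}
where the last equality uses $tr(\mathbf{B})=\sum_{i=1}^{n}\mu_{i}$. There is no real obstacle here; the only subtlety to flag is the role of $\mathbf{B}\geq 0$, because if $\mathbf{B}$ were indefinite the inequality could be reversed on the subspace corresponding to negative $\mu_{i}$. Alternatively, one could give a one-line proof by writing $tr(\mathbf{A}\mathbf{B})=tr(\mathbf{B}^{1/2}\mathbf{A}\mathbf{B}^{1/2})$ and using $\mathbf{A}\leq \lambda_{max}(\mathbf{A})I_{n}$ together with monotonicity of the trace on positive semi-definite matrices, but the spectral argument above is the most transparent.
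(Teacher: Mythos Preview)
Your argument is correct and complete. The paper itself does not supply a proof of this lemma; it simply records the statement as ``a direct consequence of \cite[Theorem 7.4.1.1]{Horn and Johnson 2012}'' and moves on. Your spectral-decomposition approach is the standard way to establish this inequality and is exactly the kind of argument that underlies the cited result, so you have effectively filled in what the paper leaves to the reference. The alternative one-line proof you mention via $tr(\mathbf{B}^{1/2}\mathbf{A}\mathbf{B}^{1/2})$ and $\mathbf{A}\leq\lambda_{\max}(\mathbf{A})I_{n}$ is equally valid and perhaps closer in spirit to how the result is packaged in Horn and Johnson.
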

\begin{corollary}
Let $\mathbf{A}\in\mathbb{R}^{n\times m},\mathbf{B}\in\mathbb{R}^{m\times k}$. We have
\begin{align*}
|\mathbf{A}\mathbf{B}|\leq|\mathbf{A}|\cdot|\mathbf{B}|.
\end{align*}
\end{corollary}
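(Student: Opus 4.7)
The plan is to reduce the Frobenius norm inequality to a single application of Lemma \ref{le-a} by rewriting $|\mathbf{AB}|^2$ as a trace of a product of symmetric positive semi-definite matrices. Using the definition of $|\cdot|$ and the cyclic property of the trace,
\begin{align*}
|\mathbf{A}\mathbf{B}|^{2}=\operatorname{tr}\bigl(\mathbf{A}\mathbf{B}(\mathbf{A}\mathbf{B})^{\top}\bigr)=\operatorname{tr}\bigl(\mathbf{A}\mathbf{B}\mathbf{B}^{\top}\mathbf{A}^{\top}\bigr)=\operatorname{tr}\bigl((\mathbf{A}^{\top}\mathbf{A})(\mathbf{B}\mathbf{B}^{\top})\bigr),
\end{align*}
so the task becomes bounding the trace of the product of the two Gram matrices $\mathbf{A}^{\top}\mathbf{A}$ and $\mathbf{B}\mathbf{B}^{\top}$, both of which lie in $\mathbb{S}^{m}$ and are positive semi-definite.

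Next I would apply Lemma \ref{le-a} with $\mathbf{A}\leftarrow \mathbf{A}^{\top}\mathbf{A}$ and $\mathbf{B}\leftarrow \mathbf{B}\mathbf{B}^{\top}$ (the latter is positive semi-definite as required) to obtain
\begin{align*}
\operatorname{tr}\bigl((\mathbf{A}^{\top}\mathbf{A})(\mathbf{B}\mathbf{B}^{\top})\bigr)\le \lambda_{\max}(\mathbf{A}^{\top}\mathbf{A})\cdot \operatorname{tr}(\mathbf{B}\mathbf{B}^{\top})=\lambda_{\max}(\mathbf{A}^{\top}\mathbf{A})\cdot |\mathbf{B}|^{2}.
\end{align*}
Since $\mathbf{A}^{\top}\mathbf{A}$ is positive semi-definite, all its eigenvalues are non-negative and therefore $\lambda_{\max}(\mathbf{A}^{\top}\mathbf{A})\le \operatorname{tr}(\mathbf{A}^{\top}\mathbf{A})=|\mathbf{A}|^{2}$. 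Combining the two estimates yields $|\mathbf{A}\mathbf{B}|^{2}\le |\mathbf{A}|^{2}|\mathbf{B}|^{2}$, and taking square roots gives the claim.

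There is no real obstacle here; the only subtle point is choosing the right ordering inside the trace so that Lemma \ref{le-a} is applicable (one needs the second factor to be positive semi-definite, which is why we group the product as $(\mathbf{A}^{\top}\mathbf{A})(\mathbf{B}\mathbf{B}^{\top})$ rather than as $(\mathbf{A}\mathbf{B})(\mathbf{A}\mathbf{B})^{\top}$ viewed directly). The bound $\lambda_{\max}\le \operatorname{tr}$ for positive semi-definite matrices is the other elementary ingredient and requires no further argument.
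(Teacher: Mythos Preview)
Your argument is correct and is exactly the derivation the paper has in mind: the corollary is stated immediately after Lemma~\ref{le-a} with no separate proof, so the intended route is precisely to rewrite $|\mathbf{AB}|^{2}=\operatorname{tr}\bigl((\mathbf{A}^{\top}\mathbf{A})(\mathbf{B}\mathbf{B}^{\top})\bigr)$, apply Lemma~\ref{le-a}, and then bound $\lambda_{\max}(\mathbf{A}^{\top}\mathbf{A})\le\operatorname{tr}(\mathbf{A}^{\top}\mathbf{A})=|\mathbf{A}|^{2}$. Nothing to add.
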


\subsection{Main results}
In \cite{Chen and Luo 2023}, Chen and Luo showed the existence of the solution for SRE \eqref{Riccati}. Now we give the following theorem, which also gives the uniqueness of the solution for SRE \eqref{Riccati}.
\begin{theorem}\label{th-1}
Let $(\mathscr{A}1)$ and $(\mathscr{A}2)$ hold. There exists a sufficiently small constant $L_{\sigma}>0$, when
\begin{equation}\label{condition}
|D(t,i)R(t,i)^{-1}D(t,i)^{\top}|\leq L_{\sigma},\quad i\in\mathcal{M},~t\in[0,T],
\end{equation}
SRE \eqref{Riccati} has a unique solution $\left(P(\cdot,i),\Lambda(\cdot,i)\right)_{i=1}^{l}$ such that $\left(P(\cdot,i),\Lambda(\cdot,i)\right)\in L_{\mathcal{F}^{W}}^{\infty}(0,T;\mathbb{S}^{n})$\\
$\times L_{\mathcal{F}^{W}}^{2,\mathrm{bmo}}(0,T;\mathbb{S}^{n})$ and $P(\cdot,i)\geq0$ for all $i\in\mathcal{M}$.
\end{theorem}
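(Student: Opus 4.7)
The plan is to prove existence and uniqueness simultaneously by Banach fixed point applied to a linearization operator $\Theta$ on a closed ball of the product space $\bigl(L^\infty_{\mathcal F^W}(0,T;\mathbb S^n)\times L^{2,\mathrm{bmo}}_{\mathcal F^W}(0,T;\mathbb S^n)\bigr)^\ell$. To set up the ball, I would first derive uniform a priori bounds on any prospective solution $(P,\Lambda)$: an It\^o calculation on $|P(\cdot,i)|$, using $Q,G\ge 0$, $R\ge\lambda I_m$, $\sum_j q_{ij}=0$, and the fact that the quadratic-in-$\Lambda$ term in the driver of \eqref{Riccati} is $-\Lambda D(R+D^\top PD)^{-1}D^\top\Lambda$ whose coefficient is dominated in the PSD order by $DR^{-1}D^\top$, yields an $L^\infty$ bound $0\le P(\cdot,i)\le M_P I_n$ with $M_P$ depending only on $T$ and the data. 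A standard BMO estimate on the linear part of the BSDE then provides $\|\Lambda(\cdot,i)\|_{2,\mathrm{bmo}}\le M_\Lambda$. Denote by $\mathcal B$ the closed convex set cut out by these bounds.

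Given $(\bar P,\bar\Lambda)\in\mathcal B$, I set
\begin{align*}
\bar\Gamma(\cdot,i)=\bigl(R+D^\top\bar P D\bigr)^{-1}\bigl(B^\top\bar P+D^\top\bar P C+D^\top\bar\Lambda\bigr)(\cdot,i)
\end{align*}
(well-defined since $R\ge\lambda I_m$ and $\bar P\ge 0$) and define $\Theta(\bar P,\bar\Lambda)=(P,\Lambda)$ to be the unique solution of the coupled linear matrix BSDE obtained from \eqref{Riccati} by freezing the full Hamiltonian term into the source as $-\bar\Gamma^\top(R+D^\top\bar P D)\bar\Gamma$. This is a coupled linear matrix BSDE with $L^\infty$ coefficients and BMO source, whose existence and $L^\infty\times\mathrm{BMO}$ regularity follow from standard linear BSDE theory. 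Invariance $\Theta(\mathcal B)\subset\mathcal B$ is obtained by redoing the a priori argument on the linearized equation: the same structural dominance $D(R+D^\top\bar PD)^{-1}D^\top\le DR^{-1}D^\top$ allows the source to be absorbed into the Gronwall bound once $L_\sigma$ is small.

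For the contraction step, I would take two inputs $(\bar P^k,\bar\Lambda^k)\in\mathcal B$, $k=1,2$, and consider the linear BSDE satisfied by $(\delta P,\delta\Lambda):=(P^1-P^2,\Lambda^1-\Lambda^2)$ with zero terminal. Its driver difference $(\bar\Gamma^2)^\top(R+D^\top\bar P^2 D)\bar\Gamma^2-(\bar\Gamma^1)^\top(R+D^\top\bar P^1 D)\bar\Gamma^1$ expands into terms linear in $\delta\bar P$ and $\delta\bar\Lambda$; using Frobenius submultiplicativity, the PSD ordering $(R+D^\top\bar PD)^{-1}\le R^{-1}$, and $|DR^{-1}D^\top|\le L_\sigma$, the coefficient of $\delta\bar\Lambda$ is controlled by $CL_\sigma M_\Lambda$. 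Energy estimates on the difference BSDE then yield
\begin{align*}
\|\delta P\|_{\infty}+\|\delta\Lambda\|_{2,\mathrm{bmo}}\le C(M_P,M_\Lambda,\mathrm{data})\,L_\sigma\,\bigl(\|\delta\bar P\|_{\infty}+\|\delta\bar\Lambda\|_{2,\mathrm{bmo}}\bigr),
\end{align*}
which is a strict contraction for $L_\sigma$ sufficiently small. Banach's theorem then furnishes a unique fixed point in $\mathcal B$, and any solution of \eqref{Riccati} lies in $\mathcal B$ by the a priori estimate, giving global uniqueness. The main obstacle will be precisely this contraction estimate: the frozen source is quadratic in $\bar\Lambda$, so a naive Lipschitz constant scales with $M_\Lambda$ and need not be small; the key bookkeeping is that after expanding the quadratic difference, the leading coefficient of $\delta\bar\Lambda$ is structurally of the form $D(\cdots)^{-1}D^\top$, and the condition \eqref{condition} is precisely what makes this contribution small enough to close the estimate.
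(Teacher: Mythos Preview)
Your contraction estimate is where the argument breaks. You claim
\[
\|\delta P\|_{\infty}+\|\delta\Lambda\|_{2,\mathrm{bmo}}\le C(M_P,M_\Lambda,\mathrm{data})\,L_\sigma\,\bigl(\|\delta\bar P\|_{\infty}+\|\delta\bar\Lambda\|_{2,\mathrm{bmo}}\bigr),
\]
but the Hamiltonian difference $(\bar\Gamma^{1})^{\!\top}(R+D^\top\bar P^{1}D)\bar\Gamma^{1}-(\bar\Gamma^{2})^{\!\top}(R+D^\top\bar P^{2}D)\bar\Gamma^{2}$ contains terms such as $\delta\bar P\,B(R+D^\top\bar P D)^{-1}B^\top\bar P$ whose coefficient involves $BR^{-1}B^\top$, not $DR^{-1}D^\top$; these are $O(1)$ in the data and carry no factor of $L_\sigma$. (Take $D\equiv 0$: then $L_\sigma=0$ but the frozen source $-\bar P B R^{-1}B^\top\bar P$ still depends nontrivially on $\bar P$.) So freezing the entire Hamiltonian cannot give a contraction constant proportional to $L_\sigma$; at best one gets a constant of the form $C_1(\epsilon)+C_2 L_\sigma M_\Lambda$ on a small interval $[T-\epsilon,T]$, and the argument must then be localized in time. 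A second problem is invariance: your linearized BSDE for $P$ has the \emph{nonpositive} source $-\bar\Gamma^\top(R+D^\top\bar P D)\bar\Gamma$, and a linear matrix BSDE with $Q\ge 0$, $G\ge 0$ plus a negative source has no reason to produce $P\ge 0$, so $\Theta$ need not map your ball $\mathcal B$ (which requires $P\ge 0$) into itself.

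The paper sidesteps both issues by freezing much less. Its map $\Theta$ keeps the full nonlinear term $H(t,i,P,\Lambda)$ in the \emph{output} variables and freezes only the regime-switching coupling $\sum_{j\neq i}q_{ij}p(t,j)$. Each component then is a genuine (decoupled) stochastic Riccati equation with a nonnegative extra source, whose well-posedness and positivity are supplied by \cite{Chen and Luo 2023}. In the difference estimate, $H(P_1,\Lambda_1)-H(P_2,\Lambda_2)$ is split into pieces $H_1,\dots,H_4$ that are linear in the \emph{output} increments $(\Delta P,\Delta\Lambda)$; the only piece whose coefficient is merely BMO (and not made small by shrinking the time interval) is $H_4$, which is exactly of the form $D(R+D^\top P D)^{-1}D^\top\Lambda_k\cdot\Delta\Lambda$ and is controlled by $\|DR^{-1}D^\top\Lambda_k\|_{2,\mathrm{bmo}}\le L_\sigma M_\Lambda$. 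Thus smallness of $L_\sigma$ is used once, to absorb $H_4$, while the remaining $O(1)$ terms are absorbed by working on $[T-\epsilon,T]$ and iterating. If you want to rescue your linearization, you would need to (i) localize in time to handle the $\delta\bar P$ terms, and (ii) either drop positivity from the ball and separately guarantee $R+D^\top\bar P D>0$, or change the freezing so that positivity is preserved---which essentially brings you back to the paper's construction.
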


Based on the solvability of SRE \eqref{Riccati}, we can propose the following theorem, showing the existence and uniqueness results for the solution of BSDE \eqref{K}, whose coefficients are unbounded.

\begin{theorem}\label{th-2}
Under the conditions of Theorem \ref{th-1}, BSDE \eqref{K} has a unique solution $\left(K(\cdot,i),L(\cdot,i)\right)_{i=1}^{l}$ such that $\left(K(\cdot,i),L(\cdot,i)\right)\in L_{\mathcal{F}^{W}}^{\infty}(0,T;\mathbb{R}^{n})\times L_{\mathcal{F}^{W}}^{2,\mathrm{bmo}}(0,T;\mathbb{R}^{n})$ for all $i\in\mathcal{M}$.
\end{theorem}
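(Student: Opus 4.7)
The plan is to set up a contraction-mapping argument for the coupled linear BSDE \eqref{K} on the Banach space $\mathcal{X}:=\bigl(L^{\infty}_{\mathcal{F}^W}(0,T;\mathbb{R}^n)\times L^{2,\mathrm{bmo}}_{\mathcal{F}^W}(0,T;\mathbb{R}^n)\bigr)^{\ell}$ carrying the regime-indexed unknowns $(K(\cdot,i),L(\cdot,i))_{i=1}^{\ell}$. First I would unpack the generator of \eqref{K} as linear coefficients $M_1(t,i):=(A(t,i)-B(t,i)\Gamma(t,i))^{\top}$ acting on $K$, $M_2(t,i):=(C(t,i)-D(t,i)\Gamma(t,i))^{\top}$ acting on $L$, a regime coupling $\sum_{j=1}^{\ell}q_{ij}K(\cdot,j)$, and an inhomogeneous source $F(t,i)$ absorbing every remaining term. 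Theorem \ref{th-1} splits $\Gamma$ as a bounded part plus $(R+D^{\top}PD)^{-1}D^{\top}\Lambda$, so $M_1,M_2$ and $F$ all lie in (bounded)$+$BMO. Crucially, the hypothesis $|DR^{-1}D^{\top}|\leq L_{\sigma}$ together with $R\geq\lambda I_m$ yields $|D|^{2}\leq L_{\sigma}\|R\|_{\infty}$; hence every $\Lambda$-carrying piece of $M_1,M_2,F$ inherits an $O(\sqrt{L_{\sigma}})$ prefactor from the $D$ or $D^{\top}$ in front of it.

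Next, I would define $\Phi:\mathcal{X}\to\mathcal{X}$ by $\Phi(\bar K,\bar L)=(K,L)$, where $K(t,i)$ is the continuous version of
\[
\mathbb{E}\Bigl[G(i)g(i)+\int_t^T\Bigl(M_1(s,i)\bar K(s,i)+M_2(s,i)\bar L(s,i)+\textstyle\sum_{j=1}^{\ell}q_{ij}\bar K(s,j)+F(s,i)\Bigr)ds\,\Big|\,\mathcal{F}_t\Bigr],
\]
and $L(\cdot,i)$ is extracted from the martingale representation of the above $\mathcal{F}^W$-martingale. That $\Phi$ sends $\mathcal{X}$ into itself rests on two standard ingredients: conditional Cauchy--Schwarz on the BMO-type products (for instance $\mathbb{E}[\int_{\tau}^{T}|M_{2}\bar L|\,ds\mid\mathcal{F}_{\tau}]\leq\|M_{2}\|_{\mathrm{bmo}}\|\bar L\|_{\mathrm{bmo}}$) produces the $L^{\infty}$-bound on $K$, and It\^o's formula applied to $|K|^{2}$, followed by conditioning at an arbitrary $\{\mathcal{F}_t\}$-stopping time $\tau\leq T$ and a Young-type absorption of the $K^{\top}L$-cross term, produces the BMO-bound on $L$.

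For the contraction I compare two images and look at the difference $(\delta K,\delta L)$, which satisfies the same linear BSDE with zero terminal data and with $(\bar K_1-\bar K_2,\bar L_1-\bar L_2)$ as inputs. Rerunning the two estimates above on a sub-interval $[T-\delta,T]$ yields a bound of the form
\[
\sum_{i=1}^{\ell}\bigl(\|\delta K(\cdot,i)\|_{\infty}^{2}+\|\delta L(\cdot,i)\|_{\mathrm{bmo}}^{2}\bigr)\leq \kappa\sum_{j=1}^{\ell}\bigl(\|\bar K_1(\cdot,j)-\bar K_2(\cdot,j)\|_{\infty}^{2}+\|\bar L_1(\cdot,j)-\bar L_2(\cdot,j)\|_{\mathrm{bmo}}^{2}\bigr),
\]
where $\kappa=C(L_{\sigma}+\delta)$: the bounded parts of $M_1,M_2$ and the regime coupling contribute a $\sqrt{\delta}$ factor (i.e.\ $\delta$ after squaring), while the unbounded BMO parts contribute a $\sqrt{L_{\sigma}}$ factor. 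Choosing $L_{\sigma}$ and $\delta$ small enough drives $\kappa<1$, so Banach's theorem gives a unique fixed point on $[T-\delta,T]$, and finitely many backward concatenations extend the solution to all of $[0,T]$. The main obstacle is the one flagged by the authors: since $K$ is vector-valued against a scalar Brownian motion one cannot Girsanov away the $M_{2}L$-term, so the unbounded $\Lambda$-dependence has to be absorbed directly through the BMO calculus, and the contraction closes only because $L_{\sigma}$ is small enough to shrink every $\Lambda$-dependent coefficient entering the estimate.
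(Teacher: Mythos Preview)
Your proposal is correct and takes essentially the same approach as the paper: a contraction mapping on $L^{\infty}\times L^{2,\mathrm{bmo}}$ over a short sub-interval, with the smallness of $L_{\sigma}$ used precisely to control the one term that carries no $\delta$-factor, namely the unbounded coefficient $D(R+D^{\top}PD)^{-1}D^{\top}\Lambda$ acting on $L$. The only organizational difference is that the paper first stacks the $\ell$ regime-indexed equations into a single $n\ell$-dimensional BSDE (absorbing the coupling $\sum_{j}q_{ij}K(\cdot,j)$ into the block coefficient $\underline{\alpha}$) and isolates the contraction argument as the standalone Lemma~\ref{le-b}, whereas you run the same estimates directly on the tuple; one minor slip is that the bare term $-\Lambda\sigma$ in your source $F$ carries no $D$-prefactor and hence no $\sqrt{L_{\sigma}}$, but since $F$ is inhomogeneous this does not enter the contraction constant and the argument is unaffected.
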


By leveraging the solvability of SRE \eqref{Riccati} and BSDE \eqref{K}, we can effectively address Problem (SLQ). The following theorem provides an optimal feedback control for Problem (SLQ), as well as the optimal value.

\begin{theorem}\label{th-3}
Under the conditions of Theorem \ref{th-1}, Problem (SLQ) admits an optimal control, as a feedback function of the time $t$, the market regime $i$ and the state $X$,
\begin{equation}\label{optimal}
\begin{aligned}
u^{*}(t,X,i)=&-\left(R(t,i)+D(t,i)^{\top}P(t,i)D(t,i)\right)^{-1}\\
&\cdot\left[\left(B(t,i)^{\top}P(t,i)+D(t,i)^{\top}P(t,i)C(t,i)+D(t,i)^{\top}\Lambda(t,i)\right)X\right.\\
&\quad+\left.D(t,i)^{\top}P(t,i)\sigma(t,i)-R(t,i)r(t,i)-B(t,i)^{\top}K(t,i)-D(t,i)^{\top}L(t,i)\right]
\end{aligned}
\end{equation}
Moreover, the corresponding optimal value is
\begin{align*}
V(x, i_{0})=&\langle P(0,i_{0})x,x\rangle-2\langle K(0,i_{0}),x\rangle+\mathbb{E}\big[\langle G(\alpha_{T})g(\alpha_{T}),g(\alpha_{T})\rangle\big]\\
&+\mathbb{E}\bigg[\int_{0}^{T}\big[\langle Q(t,\alpha_{t})q(t,\alpha_{t}),q(t,\alpha_{t})\rangle+\langle R(t,\alpha_{t})r(t,\alpha_{t}),r(t,\alpha_{t})\rangle+\langle P(t,\alpha_{t})\sigma(t,\alpha_{t}),\sigma(t,\alpha_{t})\rangle\\
&\quad-2\langle K(t,\alpha_{t}),b(t,\alpha_{t})\rangle-2\langle L(t,\alpha_{t}),\sigma(t,\alpha_{t})\rangle-\langle\big(R(t,\alpha_{t})+D(t,\alpha_{t})^{\top}P(t,\alpha_{t})D(t,\alpha_{t})\big)^{-1}\\
&\quad\cdot \big(D(t,\alpha_{t})^{\top}P(t,\alpha_{t})\sigma(t,\alpha_{t})-R(t,\alpha_{t})r(t,\alpha_{t})-B(t,\alpha_{t})^{\top}K(t,\alpha_{t})-D(t,\alpha_{t})^{\top}L(t,\alpha_{t})\big),\\
&\qquad D(t,\alpha_{t})^{\top}P(t,\alpha_{t})\sigma(t,\alpha_{t})-R(t,\alpha_{t})r(t,\alpha_{t})-B(t,\alpha_{t})^{\top}K(t,\alpha_{t})-D(t,\alpha_{t})^{\top}L(t,\alpha_{t})\rangle\big)dt\bigg],
\end{align*}
where $\left(P(\cdot,i),\Lambda(\cdot,i)\right)_{i=1}^{l}$ is the unique solution of \eqref{Riccati} and $\left(K(\cdot,i),L(\cdot,i)\right)_{i=1}^{l}$ is the unique solution of \eqref{K}.
\end{theorem}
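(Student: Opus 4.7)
The plan is to execute a completion-of-squares argument adapted to the regime-switching setting, using both the Riccati equation \eqref{Riccati} and the adjoint BSDE \eqref{K}. For an arbitrary admissible pair $(X,u)$, I would apply It\^o's formula to the process $\langle P(t,\alpha_{t})X(t),X(t)\rangle - 2\langle K(t,\alpha_{t}),X(t)\rangle$. The expansion must contain four kinds of terms: (i) the $dt$-drift coming from $dP(t,\alpha_{t})$ and $dK(t,\alpha_{t})$, i.e.\ the bracket in \eqref{Riccati} and \eqref{K}; (ii) the $dW$-martingales $\Lambda(t,\alpha_{t}) dW$ and $L(t,\alpha_{t}) dW$ together with the cross variations with $(CX+Du+\sigma)dW$; (iii) the usual It\^o terms in $X$ produced from the quadratic form $\langle P X,X\rangle$; (iv) the compensator of the Markov-chain jumps, which provides the $\sum_{j}q_{\alpha_{t}j}P(t,j)$ and $\sum_{j}q_{\alpha_{t}j}K(t,j)$ pieces required to match \eqref{Riccati} and \eqref{K}. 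The martingale pieces are true martingales thanks to the BMO property of $(\Lambda,L)$ combined with $X\in L_{\mathcal F}^{2}(\Omega;C(0,T;\mathbb R^{n}))$ and the boundedness of $P,K$.

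After substituting the equations \eqref{Riccati} and \eqref{K} into the drift, and grouping the $u$-dependent and $u$-free contributions, I expect a pointwise algebraic identity of the form
\begin{equation*}
\begin{aligned}
&\langle Q(X-q),X-q\rangle+\langle R(u-r),u-r\rangle+ d\bigl(\langle PX,X\rangle-2\langle K,X\rangle\bigr)/dt\\
&\quad =\bigl\langle\bigl(R+D^{\top}PD\bigr)(u-u^{*}),u-u^{*}\bigr\rangle+\mathcal{R}(t,\alpha_{t})+\text{(martingale increment)},
\end{aligned}
\end{equation*}
where $u^{*}$ is exactly \eqref{optimal} and $\mathcal{R}(t,i)$ is the $u$-independent integrand appearing under the expectation in the stated value function. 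The verification of this identity is the bulk of the algebra: every cross term (e.g.\ $\langle PBu,X\rangle$, $\langle\Lambda Du,X\rangle$, $\langle D^{\top}P\sigma,u\rangle$, $\langle B^{\top}K,u\rangle$, etc.) must cancel against the corresponding term produced by completing the square $\langle(R+D^{\top}PD)(u-u^{*}),u-u^{*}\rangle$ when $u^{*}$ is expanded via its definition. The structure of \eqref{Riccati} (designed to absorb the $X$-quadratic cross terms) and \eqref{K} (designed to absorb the $X$-linear cross terms and the non-homogeneous terms $b,\sigma,q,r$) is what makes this cancellation succeed. Integrating on $[0,T]$, taking expectation, and using the terminal data $P(T,i)=G(i)$, $K(T,i)=G(i)g(i)$ yields
\begin{equation*}
J(x,i_{0},u)=V_{\mathrm{can}}(x,i_{0})+\mathbb{E}\int_{0}^{T}\bigl\langle(R+D^{\top}PD)(u-u^{*}),u-u^{*}\bigr\rangle dt,
\end{equation*}
where $V_{\mathrm{can}}(x,i_{0})$ is the right-hand side of the claimed value formula. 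Since $R+D^{\top}PD\geq \lambda I_{m}>0$, the infimum is attained exactly when $u=u^{*}$, and it equals $V_{\mathrm{can}}(x,i_{0})$.

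It remains to show that the feedback $u^{*}(t,X,i)$ defined by \eqref{optimal} is admissible, i.e.\ that plugging $u^{*}(t,X(t),\alpha_{t})$ back into \eqref{state} produces a unique strong solution $X^{*}\in L_{\mathcal{F}}^{2}(\Omega;C(0,T;\mathbb{R}^{n}))$ and that the resulting control lies in $\mathcal{U}[0,T]$. The closed-loop drift and diffusion coefficients are linear in $X$ with coefficients that are bounded on the $P,K$ parts but only BMO on the $\Lambda,L$ parts. Here I would invoke the BMO-martingale machinery: the $L$-dependent and $\Lambda$-dependent terms can be handled by Girsanov's theorem together with the reverse H\"older inequality for BMO martingales (so that the stochastic exponential of $\int\Lambda dW$, etc., is a genuine martingale with all $L^{p}$-moments for some $p>1$), and then a standard fixed-point/Gronwall argument in an appropriate weighted $L^{2}$-space gives existence, uniqueness and the required integrability of $X^{*}$; the $L^{2}$-integrability of $u^{*}(\cdot,X^{*}(\cdot),\alpha_{\cdot})$ then follows from the BMO--H\"older inequality.

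The main obstacle I expect is not the completion-of-squares computation itself, which is essentially forced by the shapes of \eqref{Riccati} and \eqref{K}, but rather the well-posedness of the closed-loop SDE with BMO (hence unbounded) coefficients on the $\Lambda,L$-terms, and the accompanying justification that the local martingale produced by It\^o's formula is a true martingale so that its expectation vanishes; these two points must both be established by BMO estimates rather than the usual bounded-coefficient arguments.
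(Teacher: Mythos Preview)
Your proposal is correct and follows essentially the same completion-of-squares strategy as the paper: apply It\^o's formula to $\langle P(t,\alpha_t)X(t),X(t)\rangle$ and $-2\langle K(t,\alpha_t),X(t)\rangle$, substitute the Riccati and adjoint BSDE dynamics, and regroup to obtain $J(x,i_0,u)=V_{\mathrm{can}}(x,i_0)+\mathbb{E}\int_0^T\langle(R+D^{\top}PD)(u-u^{*}),u-u^{*}\rangle\,dt$. The only notable difference is that the paper does not work through the admissibility of $u^{*}$ or the true-martingale justification via the BMO/reverse-H\"older machinery you outline; it simply invokes Lemma~4.2 of \cite{Hu et al. 2022b} for admissibility and passes directly to expectations, so your treatment of these technical points is more explicit than the paper's but not methodologically different.
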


\section{Proofs of our main results}
\subsection{Solvability of SRE \eqref{Riccati}}
In this subsection, we focus on the solvability of SRE \eqref{Riccati}. The key ingredient of our approach is the construction of a contraction map, which is inspired by \cite{Chen and Luo 2023}. 

\begin{proof}[Proof\ of\ Theorem \ref{th-1}]
First of all, for $i\in\mathcal{M},~t\in[0,T]$, we denote
\begin{equation*}\label{notation}
\begin{aligned}
&\Pi(t,i,P,\Lambda):=P(t,i)A(t,i)+A(t,i)^{\top}P(t,i)+C(t,i)^{\top}P(t,i)C(t,i)+\Lambda(t,i)C(t,i)+C(t,i)^{\top}\Lambda(t,i),\\
&H(t,i,P,\Lambda):=-\left(P(t,i)B(t,i)+C(t,i)^{\top}P(t,i)D(t,i)+\Lambda(t,i)D(t,i)\right)\\
&\quad\cdot\left(R(t,i)+D(t,i)^{\top}P(t,i)D(t,i)\right)^{-1}\left(B(t,i)^{\top}P(t,i)+D(t,i)^{\top}P(t,i)C(t,i)+D(t,i)^{\top}\Lambda(t,i)\right).
\end{aligned}
\end{equation*}
Consider $\left(p(\cdot,i),\lambda(\cdot,i)\right)_{i=1}^{l}\in L_{\mathcal{F}^{W}}^{\infty}(0,T;\mathbb{S}^{n})\times L_{\mathcal{F}^{W}}^{2,\mathrm{bmo}}(0,T;\mathbb{S}^{n\times d})$, $p(\cdot,i)\geq0$ and the following equation:
\begin{equation}\label{BSDE3}
\left\{
\begin{array}{l}
dP(t,i)=-[\Pi(t,i,P,\Lambda)+Q(t,i)+H(t,i,P,\Lambda)+q_{ii}P(t,i)+\sum_{j\neq i}q_{ij}p(t,j)]dt+\Lambda(t,i)dW(t),\\
R(t,i)+D(t,i)^{\top}P(t,i)D(t,i)>0,~t\in[0,T],~i\in\mathcal{M},\\
P(T,i)=G(i).
\end{array}
\right.
\end{equation}
From \cite{Chen and Luo 2023}, BSDE \eqref{BSDE3} has a unique solution $\left(P(\cdot,i),\Lambda(\cdot,i)\right)_{i=1}^{l}\in L_{\mathcal{F}^{W}}^{\infty}(0,T;\mathbb{S}^{n})\times L_{\mathcal{F}^{W}}^{2,\mathrm{bmo}}(0,T;\mathbb{S}^{n\times d})$ and $P(\cdot,i)\geq0$. Now we are going to prove $\Theta\big((p(\cdot,i),\lambda(\cdot,i))_{i=1}^{l}\big):=(P(\cdot,i),\Lambda(\cdot,i))_{i=1}^{l}$ is a contraction map on the closed convex set $\mathscr{C}$ by
\begin{equation*}
\begin{aligned}
\mathscr{C}:=&\bigg\{(p(\cdot,i),\lambda(\cdot,i))_{i=1}^{l}\in L_{\mathcal{F}^{W}}^{\infty}(0,T;\mathbb{S}^{n})\times L_{\mathcal{F}^{W}}^{2,\mathrm{bmo}}(0,T;\mathbb{S}^{n}):p(\cdot,i)\geq0,~\|e^{\frac{1}{2}\rho\cdot}p(\cdot,i)\|^{2}_{L_{\mathcal{F}^{W}}^{\infty}(0,T;\mathbb{S}^{n})}\\
&\qquad+\frac{1}{2}\|e^{\frac{1}{2}\cdot}\lambda(\cdot,i)\|^{2}_{L_{\mathcal{F}^{W}}^{2,\mathrm{bmo}}(0,T;\mathbb{S}^{n})}\leq2\left[\|e^{\frac{1}{2}\rho T}G(i)\|^{2}_{L_{\mathcal{F}^{W}}^{\infty}(\Omega;\mathbb{S}^{n})}+\|e^{\frac{1}{2}\rho\cdot}Q(\cdot,i)\|^{2}_{L_{\mathcal{F}^{W}}^{\infty}(0,T;\mathbb{R}^{n})}\right]\bigg\}
\end{aligned}
\end{equation*}
where $\rho$ is a positive constant determined later. Applying It\^o's formula to $e^{\rho t}|P(t,i)|^{2}$ and taking conditional expectation, we have
\begin{align*}
&e^{\rho t}|P(t,i)|^{2}+\mathbb{E}_{t}\left[\int_{t}^{T}e^{\rho s}|\Lambda(s,i)|^{2}ds\right]\\
&\quad=\mathbb{E}_{t}\left[e^{\rho T}|P(T,i)|^{2}\right]+2\mathbb{E}_{t}\left[\int_{t}^{T}\langle\Pi(s,i,P,\Lambda)+Q(s,i)+H(s,i,P,\Lambda)+q_{ii}P(s,i)\right.\\
&\qquad\qquad\qquad\qquad\qquad\qquad\left.+\sum_{j\neq i}q_{ij}p(s,j),P(s,i)\rangle ds\right]-\rho\mathbb{E}_{t}\left[\int_{t}^{T}e^{\rho s}|P(s,i)|^{2}ds\right]\\
&\quad=\mathbb{E}_{t}\left[e^{\rho T}|P(T,i)|^{2}\right]+2L_{A}\mathbb{E}_{t}\left[\int_{t}^{T}e^{\rho s}|P(s,i)|^{2}ds\right]+2L_{C}\mathbb{E}_{t}\left[\int_{t}^{T}e^{\rho s}|P(s,i)|^{2}ds\right]\\
&\qquad+\frac{1}{2}\mathbb{E}_{t}\left[\int_{t}^{T}e^{\rho s}|\Lambda(s,i)|^{2}ds\right]+\mathbb{E}_{t}\left[\int_{t}^{T}e^{\rho s}|P(s,i)|^{2}ds\right]+\mathbb{E}_{t}\left[\int_{t}^{T}e^{\rho s}|Q(s,i)|^{2}ds\right]\\
&\qquad+2(l-1)^{2}q^{2}T\mathbb{E}_{t}\left[\int_{t}^{T}e^{\rho s}|P(s,i)|^{2}ds\right]+\frac{1}{2(l-1)T}\sum_{j\neq i}\mathbb{E}_{t}\left[\int_{t}^{T}e^{\rho s}|p(s,j)|^{2}ds\right]\\
&\qquad-\rho\mathbb{E}_{t}\left[\int_{t}^{T}e^{\rho s}|P(s,i)|^{2}ds\right]\\
&\quad=\|e^{\frac{1}{2}\rho T}G(i)\|^{2}_{L_{\mathcal{F}^{W}}^{\infty}(\Omega;\mathbb{S}^{n})}+\|e^{\frac{1}{2}\rho\cdot}Q(\cdot,i)\|^{2}_{L_{\mathcal{F}^{W}}^{\infty}(0,T;\mathbb{R}^{n})}+\frac{1}{2(l-1)}\sum_{j\neq i}\|e^{\frac{1}{2}\rho\cdot}p(\cdot,j)\|^{2}_{L_{\mathcal{F}^{W}}^{\infty}(0,T;\mathbb{S}^{n})}\\
&\qquad+\bigg(2L_{A}+2L_{C}+2(l-1)^{2}q^{2}T+1-\rho\bigg)\mathbb{E}_{t}\left[\int_{t}^{T}e^{\rho s}|P(s,i)|^{2}ds\right]+\frac{1}{2}\mathbb{E}_{t}\left[\int_{t}^{T}e^{\rho s}|\Lambda(s,i)|^{2}ds\right]
\end{align*}
where $q=\max\limits_{j\neq i}q_{ij}$ and $L_{A},L_{C}$ are positive constants depending on $A(\cdot,i),C(\cdot,i)$. Now we choose $\rho=2L_{A}+2L_{C}+2(l-1)^{2}q^{2}T+1$ and obtain $(P(\cdot,i),\Lambda(\cdot,i))_{i=1}^{l}\in\mathscr{C}$ from $(p(\cdot,i),\lambda(\cdot,i))_{i=1}^{l}\in\mathscr{C}$. For $(p_{1}(\cdot,i),\lambda_{1}(\cdot,i))_{i=1}^{l},(p_{2}(\cdot,i),\lambda_{2}(\cdot,i))_{i=1}^{l}\in\mathscr{C}$, we denote
\begin{align*}
(P_{1}(\cdot,i),\Lambda_{1}(\cdot,i))_{i=1}^{l}=\Theta\big((p_{1}(\cdot,i),\lambda_{1}(\cdot,i))_{i=1}^{l}\big),~(P_{2}(\cdot,i),\Lambda_{2}(\cdot,i))_{i=1}^{l}=\Theta\big((p_{2}(\cdot,i),\lambda_{2}(\cdot,i))_{i=1}^{l}\big)
\end{align*}
and
\begin{align*}
&\Delta P(\cdot,i)=P_{1}(\cdot,i)-P_{2}(\cdot,i),~\Delta\Lambda(\cdot,i)=\Lambda_{1}(\cdot,i)-\Lambda_{2}(\cdot,i),\\
&\Delta p(\cdot,i)=p_{1}(\cdot,i)-p_{2}(\cdot,i),~\Delta\lambda(\cdot,i)=\lambda_{1}(\cdot,i)-\lambda_{2}(\cdot,i),
\end{align*}
and obtain the following BSDE:
\begin{equation}\label{BSDE4}
\left\{
\begin{array}{l}
d\Delta P(t,i)=-[\Pi(t,i,\Delta P,\Delta \Lambda)+H(t,i,P_{1},\Lambda_{1})-H(s,i,P_{2},\Lambda_{2})\\
\qquad\qquad\qquad\qquad+q_{ii}\Delta P(t,i)+\sum_{j\neq i}q_{ij}\Delta p(t,j)]dt+\Delta\Lambda(t,i)dW(t),\\
\Delta P(T,i)=0.
\end{array}
\right.
\end{equation}
Then we focus on
\begin{align*}
&H(t,i,P_{1},\Lambda_{1})-H(s,i,P_{2},\Lambda_{2})\\
&=\left[-\big(\Delta P(t,i)B(t,i)+C(t,i)^{\top}\Delta P(t,i)D(t,i)\big)\big(R(t,i)+D(t,i)^{\top}P_{1}(t,i)D(t,i)\big)^{-1}\big(B(t,i)^{\top}P_{1}(t,i)\right.\\
&\qquad+D^{\top}(t,i)P_{1}(t,i)C(t,i)\big)-\big(P_{2}(t,i)B(t,i)+C(t,i)^{\top}P_{2}(t,i)D(t,i)\big)\big(R(t,i)+D(t,i)^{\top}P_{2}(t,i)D(t,i)\big)^{-1}\\
&\qquad\cdot\big(B(t,i)^{\top}\Delta P(t,i)+D^{\top}(t,i)\Delta P(t,i)C(t,i)\big)+\big(P_{2}(t,i)B(t,i)+C(t,i)^{\top}P_{2}(t,i)D(t,i)\big)\\
&\qquad\cdot\big(R(t,i)+D(t,i)^{\top}P_{2}(t,i)D(t,i)\big)^{-1}D(t,i)^{\top}\Delta P(t,i)D(t,i)\big(R(t,i)+D(t,i)^{\top}P_{1}(t,i)D(t,i)\big)^{-1}\\
&\qquad\left.\cdot\big(B(t,i)^{\top}P_{1}(t,i)+D^{\top}(t,i)P_{1}(t,i)C(t,i)\big)\right]\\
&\quad+\left[-\big(\Delta P(t,i)B(t,i)+C(t,i)^{\top}\Delta P(t,i)D(t,i)\big)\big(R(t,i)+D(t,i)^{\top}P_{1}(t,i)D(t,i)\big)^{-1}D(t,i)^{\top}\Lambda_{1}(t,i)\right.\\
&\quad\qquad-\Lambda_{2}(t,i)D(t,i)\big(R(t,i)+D(t,i)^{\top}P_{2}(t,i)D(t,i)\big)^{-1}\big(B(t,i)^{\top}\Delta P(t,i)+D^{\top}(t,i)\Delta P(t,i)C(t,i)\big)\\
&\quad\qquad+\Lambda_{2}(t,i)D(t,i)\big(R(t,i)+D(t,i)^{\top}P_{2}(t,i)D(t,i)\big)^{-1}D(t,i)^{\top}\Delta P(t,i)D(t,i)\big(R(t,i)\\
&\quad\qquad+D(t,i)^{\top}P_{1}(t,i)D(t,i)\big)^{-1}\big(B(t,i)^{\top}P_{1}(t,i)+D^{\top}(t,i)P_{1}(t,i)C(t,i)\big)+\big(P_{2}(t,i)B(t,i)\\
&\quad\qquad+C(t,i)^{\top}P_{2}(t,i)D(t,i)\big)\big(R(t,i)+D(t,i)^{\top}P_{2}(t,i)D(t,i)\big)^{-1}D(t,i)^{\top}\Delta P(t,i)D(t,i)\\
&\quad\qquad\cdot\big(R(t,i)+D(t,i)^{\top}P_{1}(t,i)D(t,i)\big)^{-1}D(t,i)^{\top}\Lambda_{1}(t,i)+\Lambda_{2}(t,i)D(t,i)\big(R(t,i)+D(t,i)^{\top}P_{2}(t,i)D(t,i)\big)^{-1}\\
&\quad\qquad\left.\cdot D(t,i)^{\top}\Delta P(t,i)D(t,i)\big(R(t,i)+D(t,i)^{\top}P_{1}(t,i)D(t,i)\big)^{-1}D(t,i)^{\top}\Lambda_{1}(t,i)\right]\\
&\quad+\left[-\big(P_{2}(t,i)B(t,i)+C(t,i)^{\top}P_{2}(t,i)D(t,i)\big)\big(R(t,i)+D(t,i)^{\top}P_{2}(t,i)D(t,i)\big)^{-1}D(t,i)^{\top}\Delta\Lambda(t,i)\right.\\
&\quad\qquad\left.-\Delta\Lambda(t,i)D(t,i)\big(R(t,i)+D(t,i)^{\top}P_{1}(t,i)D(t,i)\big)^{-1}\big(B(t,i)^{\top}P_{1}(t,i)+D^{\top}(t,i)P_{1}(t,i)C(t,i)\big)\right]\\
&\quad+\left[-\Lambda_{2}(t,i)D(t,i)\big(R(t,i)+D(t,i)^{\top}P_{2}(t,i)D(t,i)\big)^{-1}D(t,i)^{\top}\Delta\Lambda(t,i)\right.\\
&\quad\qquad\left.-\Delta\Lambda(t,i)D(t,i)\big(R(t,i)+D(t,i)^{\top}P_{1}(t,i)D(t,i)\big)^{-1}D^{\top}(t,i)\Lambda_{1}(t,i)\right]\\
&=:H_{1}(t,i)+H_{2}(t,i)+H_{3}(t,i)+H_{4}(t,i).
\end{align*}
Thanks to $(\mathscr{A}1)$ and $(\mathscr{A}2)$ and $\left(P_{1}(\cdot,i),\Lambda_{1}(\cdot,i)\right)_{i=1}^{l},\left(P_{2}(\cdot,i),\Lambda_{2}(\cdot,i)\right)_{i=1}^{l}\in L_{\mathcal{F}^{W}}^{\infty}(0,T;\mathbb{S}^{n})\times L_{\mathcal{F}^{W}}^{2,\mathrm{bmo}}(0,T;\mathbb{S}^{n\times d})$, $P_{1}(\cdot,i),P_{2}(\cdot,i)\geq0$, there exists nonnegative adapted processes $\bar{\alpha}(\cdot),\bar{\gamma}(\cdot)\in L_{\mathcal{F}^{W}}^{2,\mathrm{bmo}}(0,T;\mathbb{R})$ and $\bar{\beta}(\cdot)\in L_{\mathcal{F}^{W}}^{\infty}(0,T;\mathbb{R})$ such that for $i\in\mathcal{M},t\in[0,T]$,
\begin{align*}
&|\Delta P(t,i)A(t,i)+A(t,i)^{\top}\Delta P(t,i)+C(t,i)^{\top}\Delta P(t,i)C(t,i)+H_{1}(t,i)+H_{2}(t,i)|\leq\bar{\alpha}(t)|\Delta P(t,i)|,\\
&|\Delta\Lambda(t,i)C(t,i)+C(t,i)^{\top}\Delta\Lambda(t,i)+H_{3}(t,i)|\leq\bar{\beta}(t)|\Delta\Lambda(t,i)|,\\
&|H_{4}(t,i)|\leq\bar{\gamma}(t)|\Delta\Lambda(t,i)|.
\end{align*}
Here, we can choose $\bar{\gamma}(t)=2\max\left(|D(t,i)R(t,i)^{-1}D(t,i)^{\top}\Lambda_{1}(t,i)|,|D(t,i)R(t,i)^{-1}D(t,i)^{\top}\Lambda_{2}(t,i)|\right)$.

Applying It\^o's formula to $|\Delta P(t,i)|^{2}$ on $[T-\epsilon,T]$ and taking conditional expectation, we have
\begin{align*}
&|\Delta P(t,i)|^{2}+\mathbb{E}_{t}\left[\int_{t}^{T}|\Delta\Lambda(s,i)|^{2}ds\right]\\
&\quad=2\mathbb{E}_{t}\left[\int_{t}^{T}\langle\Pi(s,i,\Delta P,\Delta\Lambda)+H(s,i,P_{1},\Lambda_{1})-H(s,i,P_{2},\Lambda_{2})\right.\\
&\qquad\qquad\qquad\left.+q_{ii}\Delta P(s,i)+\sum_{j\neq i}q_{ij}\Delta p(s,j),\Delta P(s,i)\rangle ds\right]\\
&\quad\leq\epsilon^{\frac{1}{2}}\mathbb{E}_{t}\left[\int_{t}^{T}\bar{\alpha}(s)^{2}|\Delta P(s,i)|^{2}ds\right]+\epsilon^{-\frac{1}{2}}\mathbb{E}_{t}\left[\int_{t}^{T}|\Delta P(s,i)|^{2}ds\right]+\bar{a}\sum_{j\neq i}\mathbb{E}_{t}\left[\int_{t}^{T}|\Delta p(s,j)|^{2}ds\right]\\
&\qquad+\frac{(l-1)q^{2}}{\bar{a}}\mathbb{E}_{t}\left[\int_{t}^{T}|\Delta P(s,i)|^{2}ds\right]+\bar{b}\mathbb{E}_{t}\left[\int_{t}^{T}|\Delta \Lambda(s,i)|^{2}ds\right]+\frac{1}{\bar{b}}\mathbb{E}_{t}\left[\int_{t}^{T}\bar{\beta}(s)^{2}|\Delta P(s,i)|^{2}ds\right]\\
&\qquad+\bar{c}\mathbb{E}_{t}\left[\int_{t}^{T}|\Delta \Lambda(s,i)|^{2}ds\right]+\frac{1}{\bar{c}}\mathbb{E}_{t}\left[\int_{t}^{T}\bar{\gamma}(s)^{2}|\Delta P(s,i)|^{2}ds\right].
\end{align*}
for some $\epsilon,\bar{a},\bar{b},\bar{c}>0$, which implies
\begin{align*}
&\|\Delta P(\cdot,i)\|^{2}_{L_{\mathcal{F}^{W}}^{\infty}(T-\epsilon,T;\mathbb{S}^{n})}+\|\Delta\Lambda(\cdot,i)\|^{2}_{L_{\mathcal{F}^{W}}^{2,\mathrm{bmo}}(T-\epsilon,T;\mathbb{S}^{n})}\\
&\quad\leq\bar{a}\sum_{j\neq i}\|\Delta p(\cdot,j)\|^{2}_{L_{\mathcal{F}^{W}}^{\infty}(T-\epsilon,T;\mathbb{S}^{n})}+(\bar{b}+\bar{c})\|\Delta\Lambda(\cdot,i)\|^{2}_{L_{\mathcal{F}^{W}}^{2,\mathrm{bmo}}(T-\epsilon,T;\mathbb{S}^{n})}\\
&\qquad+\left(\epsilon^{\frac{1}{2}}\left\|\bar{\alpha}(\cdot)\right\|^{2}_{L_{\mathcal{F}^{W}}^{2,\mathrm{bmo}}(0,T;\mathbb{R}^{n\times n})}+\epsilon^{\frac{1}{2}}+\frac{(l-1)q^{2}\epsilon}{\bar{a}}+\frac{\epsilon}{\bar{b}}\left\|\bar{\beta}(\cdot)\right\|^{2}_{L_{\mathcal{F}^{W}}^{\infty}(0,T;\mathbb{R}^{n\times n})}\right.\\
&\quad\qquad\left.+\frac{1}{\bar{c}}\left\|\bar{\gamma}(\cdot)\right\|^{2}_{L_{\mathcal{F}^{W}}^{2,\mathrm{bmo}}(0,T;\mathbb{R}^{n\times n})}\right)\|\Delta P(\cdot,i)\|^{2}_{L_{\mathcal{F}^{W}}^{\infty}(T-\epsilon,T;\mathbb{S}^{n})}.
\end{align*}
From \eqref{condition}, when $L_{\sigma}>0$ is sufficiently small, we have
\begin{align*}
\|\bar{\gamma}(\cdot)\|_{L_{\mathcal{F}^{W}}^{2,\mathrm{bmo}}(0,T;\mathbb{R}^{n\times n})}<1,
\end{align*}
then there exists suitable $\epsilon,\bar{a},\bar{b},\bar{c}>0$ such that
\begin{equation}\label{epsilon}
\begin{aligned}
&1-\epsilon^{\frac{1}{2}}\left\|\bar{\alpha}(\cdot)\right\|^{2}_{L_{\mathcal{F}^{W}}^{2,\mathrm{bmo}}(0,T;\mathbb{R}^{n\times n})}-\epsilon^{\frac{1}{2}}-\frac{(l-1)q^{2}\epsilon}{\bar{a}}-\frac{\epsilon}{\bar{b}}\left\|\bar{\beta}(\cdot)\right\|^{2}_{L_{\mathcal{F}^{W}}^{\infty}(0,T;\mathbb{R}^{n\times n})}-\frac{1}{\bar{c}}\left\|\bar{\gamma}(\cdot)\right\|^{2}_{L_{\mathcal{F}^{W}}^{2,\mathrm{bmo}}(0,T;\mathbb{R}^{n\times n})}>0,\\
&1-\bar{b}-\bar{c}>0,\\
&\frac{\bar{a}(l-1)}{1-\epsilon^{\frac{1}{2}}\left\|\bar{\alpha}(\cdot)\right\|^{2}_{L_{\mathcal{F}^{W}}^{2,\mathrm{bmo}}(0,T;\mathbb{R}^{n\times n})}-\epsilon^{\frac{1}{2}}-\frac{(l-1)q^{2}\epsilon}{\bar{a}}-\frac{\epsilon}{\bar{b}}\left\|\bar{\beta}(\cdot)\right\|^{2}_{L_{\mathcal{F}^{W}}^{\infty}(0,T;\mathbb{R}^{n\times n})}-\frac{1}{\bar{c}}\left\|\bar{\gamma}(\cdot)\right\|^{2}_{L_{\mathcal{F}^{W}}^{2,\mathrm{bmo}}(0,T;\mathbb{R}^{n\times n})}}<1.
\end{aligned}
\end{equation}
Let
\begin{align*}
\bar{d}=\frac{\bar{a}(l-1)}{1-\epsilon^{\frac{1}{2}}\left\|\bar{\alpha}(\cdot)\right\|^{2}_{L_{\mathcal{F}^{W}}^{2,\mathrm{bmo}}(0,T;\mathbb{R}^{n\times n})}-\epsilon^{\frac{1}{2}}-\frac{(l-1)q^{2}\epsilon}{\bar{a}}-\frac{\epsilon}{\bar{b}}\left\|\bar{\beta}(\cdot)\right\|^{2}_{L_{\mathcal{F}^{W}}^{\infty}(0,T;\mathbb{R}^{n\times n})}-\frac{1}{\bar{c}}\left\|\bar{\gamma}(\cdot)\right\|^{2}_{L_{\mathcal{F}^{W}}^{2,\mathrm{bmo}}(0,T;\mathbb{R}^{n\times n})}}
\end{align*}
and we can obtain
\begin{align*}
&\frac{a(l-1)}{\bar{d}}\sum_{i=1}^{l}\|\Delta P(\cdot,i)\|^{2}_{L_{\mathcal{F}^{W}}^{\infty}(T-\epsilon,T;\mathbb{S}^{n})}+(1-\bar{b}-\bar{c})\sum_{i=1}^{l}\|\Delta\Lambda(\cdot,i)\|^{2}_{L_{\mathcal{F}^{W}}^{2,\mathrm{bmo}}(T-\epsilon,T;\mathbb{S}^{n})}\\
&\leq a(l-1)\sum_{i=1}^{l}\|\Delta p(\cdot,i)\|^{2}_{L_{\mathcal{F}^{W}}^{\infty}(T-\epsilon,T;\mathbb{S}^{n})},
\end{align*}
which means SRE \eqref{Riccati} has a unique solution $\left(P(\cdot,i),\Lambda(\cdot,i)\right)_{i=1}^{l}$ on $[T-\epsilon,T]$ such that $\left(P(\cdot,i),\Lambda(\cdot,i)\right)\in L_{\mathcal{F}^{W}}^{\infty}(T-\epsilon,T;\mathbb{S}^{n})\times L_{\mathcal{F}^{W}}^{2,\mathrm{bmo}}(T-\epsilon,T;\mathbb{S}^{n})$. With this method we can solve SRE \eqref{Riccati} on $[T-2\epsilon,T-\epsilon],[T-3\epsilon,T-2\epsilon],\ldots,$ and finally $[0,T]$.
\end{proof}

\subsection{Solvability of adjoint BSDE \eqref{K}}
On the basis of SRE \eqref{Riccati}, we study the solvability for BSDE \eqref{K}, which is the coupled system with unbounded coefficients. At first, we introduce the following lemma, which is useful in the subsequent proof.

\begin{lemma}\label{le-b}
The following BSDE
\begin{equation}\label{BSDE}
\begin{aligned}
\left\{\begin{array}{l}
dY(t)=-[\alpha(t)^{\top}Y(t)+\beta(t)^{\top}Z(t)+\gamma(t)^{\top}Z(t)+\eta(t)]dt+Z(t)dW(t),~t\in[0,T],\\
Y(T)=\xi,
\end{array}\right.
\end{aligned}
\end{equation}
with $\alpha(\cdot),\gamma(\cdot)\in L_{\mathcal{F}^{W}}^{2,\mathrm{bmo}}(0,T;\mathbb{R}^{k\times k})$, $\eta(\cdot)\in L_{\mathcal{F}^{W}}^{2,\mathrm{bmo}}(0,T;\mathbb{R}^{k})$, $\beta(\cdot)\in L_{\mathcal{F}^{W}}^{\infty}(0,T;\mathbb{R}^{k\times k})$ and\\
$\|\gamma(\cdot)\|_{L_{\mathcal{F}^{W}}^{2,\mathrm{BMO}}(0,T;\mathbb{R}^{n\times n})}<1$, admits a unique solution $\left(Y(\cdot),Z(\cdot)\right)$ such that $\left(Y(\cdot),Z(\cdot)\right)\in L_{\mathcal{F}^{W}}^{\infty}(0,T;\mathbb{R}^{k})\times L_{\mathcal{F}^{W}}^{2,\mathrm{bmo}}(0,T;\mathbb{R}^{k})$.
\end{lemma}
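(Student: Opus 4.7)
The plan is to solve \eqref{BSDE} by Banach's fixed-point theorem applied to a linearisation map, first on a short terminal interval $[T-\epsilon,T]$ and then piecing the construction together to reach $[0,T]$, in the spirit of the patching argument used for Theorem \ref{th-1}. Since $Y$ is multi-dimensional while $W$ is scalar and $\gamma$ is matrix-valued, a Girsanov change of measure cannot eliminate the $\gamma^{\top}Z$ term, so a direct contraction is needed.

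First I would introduce, on $L_{\mathcal{F}^W}^{\infty}(T-\epsilon,T;\mathbb{R}^{k})\times L_{\mathcal{F}^W}^{2,\mathrm{bmo}}(T-\epsilon,T;\mathbb{R}^{k})$, the map $\Theta(y,z)=(Y,Z)$, where $(Y,Z)$ solves the linear BSDE
\begin{equation*}
\left\{\begin{array}{l}
dY(t)=-\bigl[\beta(t)^{\top}Z(t)+\alpha(t)^{\top}y(t)+\gamma(t)^{\top}z(t)+\eta(t)\bigr]dt+Z(t)\,dW(t),\\[2pt]
Y(T)=\xi.
\end{array}\right.
\end{equation*}
Freezing the unbounded $Z$-coefficient $\gamma$ at the input $z$ leaves only the bounded driver $\beta$ acting on the unknown $Z$, and the source $\alpha^{\top}y+\gamma^{\top}z+\eta$ lies in a suitable $L^{2}$ space (by combining $y\in L^{\infty}$ with $\alpha,\gamma,\eta\in L^{2,\mathrm{bmo}}$ and the $L^{p}$ bounds enjoyed by BMO processes). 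Standard linear-BSDE theory then yields a unique solution; the required $L^{\infty}\times L^{2,\mathrm{bmo}}$ regularity for $(Y,Z)$ follows from the conditional-expectation representation of $Y$ together with the energy estimate and BMO--H\"older inequalities.

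To prove that $\Theta$ is a contraction, I would take two inputs $(y_{i},z_{i})$, set $(Y_{i},Z_{i})=\Theta(y_{i},z_{i})$, and denote differences by $\Delta$. Applying It\^o's formula to $|\Delta Y|^{2}$ on $[t,T]$ and taking $\mathbb{E}_{t}$ yields
\begin{equation*}
|\Delta Y(t)|^{2}+\mathbb{E}_{t}\!\int_{t}^{T}|\Delta Z(s)|^{2}ds=2\mathbb{E}_{t}\!\int_{t}^{T}\langle\Delta Y(s),\alpha^{\top}\Delta y+\beta^{\top}\Delta Z+\gamma^{\top}\Delta z\rangle ds.
\end{equation*}
The three cross terms on the right are then estimated by Young's inequality and Cauchy--Schwarz, exactly as in the proof of Theorem \ref{th-1}: the $\alpha^{\top}\Delta y$ term is controlled by $\epsilon^{1/2}\|\alpha\|_{\mathrm{bmo}}^{2}\|\Delta Y\|_{\infty}^{2}+\epsilon^{1/2}\|\Delta y\|_{\infty}^{2}$; the $\beta^{\top}\Delta Z$ term by $\bar b\,\mathbb{E}_{t}\!\int_{t}^{T}|\Delta Z|^{2}ds+\bar b^{-1}\|\beta\|_{\infty}^{2}\epsilon\,\|\Delta Y\|_{\infty}^{2}$; and the $\gamma^{\top}\Delta z$ term by $\bar c^{-1}\|\gamma\|_{\mathrm{bmo}}^{2}\|\Delta Y\|_{\infty}^{2}+\bar c\,\|\Delta z\|_{\mathrm{bmo}}^{2}$. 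After taking essential supremum in $\omega$ and supremum in $t\in[T-\epsilon,T]$, and absorbing the resulting $\|\Delta Y\|_{\infty}^{2}$ and $\mathbb{E}_{t}\!\int|\Delta Z|^{2}$ pieces into the left-hand side, one obtains an inequality of the form
\begin{equation*}
\|\Delta Y\|_{\infty}^{2}+\|\Delta Z\|_{\mathrm{bmo}}^{2}\le\theta_{1}(\epsilon)\|\Delta y\|_{\infty}^{2}+\theta_{2}\|\Delta z\|_{\mathrm{bmo}}^{2},
\end{equation*}
with $\theta_{1}(\epsilon)\downarrow 0$ as $\epsilon\downarrow 0$ and $\theta_{2}$ a constant multiple of $\|\gamma\|_{\mathrm{bmo}}^{2}$. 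A careful joint choice of $\epsilon,\bar b,\bar c$ together with the assumption $\|\gamma\|_{\mathrm{bmo}}<1$ then makes both $\theta_{1}(\epsilon)$ and $\theta_{2}$ strictly less than $1$, producing a contraction on $[T-\epsilon,T]$. Banach's theorem gives a unique local fixed point; since $\epsilon$ depends only on the (fixed) BMO/$L^{\infty}$ norms of $\alpha,\beta,\gamma$, the same argument can be iterated on $[T-2\epsilon,T-\epsilon],\ldots$ until the whole of $[0,T]$ is exhausted.

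The main obstacle is the $\gamma^{\top}\Delta z$ term. Because $\gamma$ is only in BMO and not bounded, this contribution cannot be made small by shrinking the interval, and the contraction on the $Z$-component rests entirely on the assumption $\|\gamma\|_{\mathrm{bmo}}<1$. The Young weights for the $\beta^{\top}\Delta Z$ and $\gamma^{\top}\Delta z$ terms must be tuned simultaneously, so that $\beta^{\top}\Delta Z$ can be absorbed into $\mathbb{E}_{t}\!\int|\Delta Z|^{2}ds$ on the left while still leaving the $\gamma^{\top}\Delta z$ coefficient strictly below one on the right.
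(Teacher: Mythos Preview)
Your proposal is correct and follows essentially the same route as the paper: a contraction map built from a linearised BSDE, energy estimates via It\^o's formula on $|\Delta Y|^{2}$, Young's inequality with tunable weights, the hypothesis $\|\gamma\|_{\mathrm{bmo}}<1$ as the non-shrinkable ingredient, and finally patching on subintervals of uniform length. The only difference is cosmetic: the paper freezes \emph{all} of $\alpha^{\top}y+\beta^{\top}z+\gamma^{\top}z$ at the input (so the auxiliary BSDE has zero driver in $(Y,Z)$ and is solved by pure martingale representation), whereas you keep the bounded piece $\beta^{\top}Z$ implicit; this merely shifts the $\beta$-contribution from the right-hand side (as $(b+c)\|\Delta z\|_{\mathrm{bmo}}^{2}$ in the paper) to an absorbable $\bar b\,\|\Delta Z\|_{\mathrm{bmo}}^{2}$ on the left in your version, and the resulting constraints on the Young parameters are equivalent.
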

\begin{proof}
We denote $\left(y(\cdot),z(\cdot)\right)\in L_{\mathcal{F}^{W}}^{\infty}(0,T;\mathbb{R}^{k})\times L_{\mathcal{F}^{W}}^{2,\mathrm{bmo}}(0,T;\mathbb{R}^{k})$
and consider the following BSDEs:
\begin{equation}\label{BSDE2}
\begin{aligned}
\left\{\begin{array}{l}
dY(t)=-[\alpha(t)^{\top}y(t)+\beta(t)^{\top}z(t)+\gamma(t)^{\top}z(t)+\eta(t)]dt+Z(t)dW(t),~t\in[0,T],\\
Y(T)=\xi.
\end{array}\right.
\end{aligned}
\end{equation}
From \cite[Proposition 2.1]{Sun and Yong 2014}, BSDE \eqref{BSDE2} has a unique solution $\left(y(\cdot),z(\cdot)\right)\in L_{\mathcal{F}^{W}}^{p}(\Omega;C(0,T;\mathbb{R}^{k}))\times L_{\mathcal{F}^{W}}^{1}(0,T;\mathbb{R}^{k})$. 

At first, we are going to prove for some constants $\varepsilon,a,d>0$, $\Gamma(y(\cdot),z(\cdot)):=(Y(\cdot),Z(\cdot))$ is a contraction map on the closed convex set $\mathscr{B}_{\varepsilon}$ by
\begin{equation*}
\begin{aligned}
\mathscr{B}_{\varepsilon}:=&\bigg\{(y(\cdot),z(\cdot))\in L_{\mathcal{F}^{W}}^{\infty}(T-\varepsilon,T;\mathbb{R}^{k})\times L_{\mathcal{F}^{W}}^{2,\mathrm{bmo}}(T-\varepsilon,T;\mathbb{R}^{k}): \frac{a}{d}\|y(\cdot)\|^{2}_{L_{\mathcal{F}^{W}}^{\infty}(T-\varepsilon,T;\mathbb{R}^{k})}\\
&\qquad+\|z(\cdot)\|^{2}_{L_{\mathcal{F}^{W}}^{2,\mathrm{bmo}}(T-\varepsilon,T;\mathbb{R}^{k})}\leq\frac{1}{1-d}\left[\|\xi\|^{2}_{L_{\mathcal{F}^{W}}^{\infty}(\Omega;\mathbb{R}^{k})}+\|\eta(\cdot)\|^{2}_{L_{\mathcal{F}^{W}}^{2,\mathrm{bmo}}(T-\varepsilon,T;\mathbb{R}^{k})}\right]\bigg\},
\end{aligned}
\end{equation*}
where $(y(\cdot),z(\cdot))$ is defined on $\Omega\times[T-\varepsilon,T]$ and $\varepsilon,a,d$ will be determined later (see \eqref{varepsilon}). Applying It\^o's formula to $|Y(t)|^{2}$ on $[T-\varepsilon,T]$ and taking conditional expectation, we have
\begin{align*}
&|Y(t)|^{2}+\mathbb{E}_{t}\left[\int_{t}^{T}|Z(s)|^{2}ds\right]\\
&=\mathbb{E}_{t}\left[|\xi|^{2}\right]+2\mathbb{E}_{t}\left[\int_{t}^{T}\langle\alpha(s)^{\top}y(s)+\beta(s)^{\top}z(s)+\gamma(s)^{\top}z(s)+\eta(s),Y(s)\rangle ds\right]\\
&\leq\mathbb{E}_{t}\left[|\xi|^{2}\right]+\frac{a}{\varepsilon}\mathbb{E}_{t}\left[\int_{t}^{T}|y(s)|^{2}ds\right]+\frac{\varepsilon}{a}\mathbb{E}_{t}\left[\int_{t}^{T}|\alpha(s)Y(s)|^{2}ds\right]+b\mathbb{E}_{t}\left[\int_{t}^{T}|z(s)|^{2}ds\right]\\
&\quad+\frac{1}{b}\mathbb{E}_{t}\left[\int_{t}^{T}|\beta(s)Y(s)|^{2}ds\right]+c\mathbb{E}_{t}\left[\int_{t}^{T}|z(s)|^{2}ds\right]+\frac{1}{c}\mathbb{E}_{t}\left[\int_{t}^{T}|\gamma(s)Y(s)|^{2}ds\right]\\
&\quad+\mathbb{E}_{t}\left[\int_{t}^{T}|\eta(s)|^{2}ds\right]+\mathbb{E}_{t}\left[\int_{t}^{T}|Y(s)|^{2}ds\right]\\
&\leq\|\xi\|^{2}_{L_{\mathcal{F}^{W}}^{\infty}(\Omega;\mathbb{R}^{k})}+\|\eta(\cdot)\|^{2}_{L_{\mathcal{F}^{W}}^{2,\mathrm{bmo}}(T-\varepsilon,T;\mathbb{R}^{k})}+a\|y(\cdot)\|^{2}_{L_{\mathcal{F}^{W}}^{\infty}(T-\varepsilon,T;\mathbb{R}^{k})}+(b+c)\|z(\cdot)\|^{2}_{L_{\mathcal{F}^{W}}^{2,\mathrm{bmo}}(T-\varepsilon,T;\mathbb{R}^{k})}\\
&\quad+\left(\frac{\varepsilon}{a}\left\|\alpha(\cdot)\right\|^{2}_{L_{\mathcal{F}^{W}}^{2,\mathrm{bmo}}(0,T;\mathbb{R}^{k\times k})}+\frac{\varepsilon}{b}\left\|\beta(\cdot)\right\|^{2}_{L_{\mathcal{F}^{W}}^{\infty}(0,T;\mathbb{R}^{k\times k})}+\frac{1}{c}\left\|\gamma(\cdot)\right\|^{2}_{L_{\mathcal{F}^{W}}^{2,\mathrm{bmo}}(0,T;\mathbb{R}^{k\times k})}+\varepsilon\right)\|Y(\cdot)\|^{2}_{L_{\mathcal{F}^{W}}^{\infty}(T-\varepsilon,T;\mathbb{R}^{k})}
\end{align*}
for some $\varepsilon,a,b,c>0$, which implies
\begin{equation}\label{bounds}
\begin{aligned}
&\|Y(\cdot)\|^{2}_{L_{\mathcal{F}^{W}}^{\infty}(T-\varepsilon,T;\mathbb{R}^{k})}+\|Z(\cdot)\|^{2}_{L_{\mathcal{F}^{W}}^{2,\mathrm{bmo}}(T-\varepsilon,T;\mathbb{R}^{k})}\\
&\leq\|\xi\|^{2}_{L_{\mathcal{F}^{W}}^{\infty}(\Omega;\mathbb{R}^{k})}+\|\eta(\cdot)\|^{2}_{L_{\mathcal{F}^{W}}^{2,\mathrm{bmo}}(T-\varepsilon,T;\mathbb{R}^{k})}+a\|y(\cdot)\|^{2}_{L_{\mathcal{F}^{W}}^{\infty}(T-\varepsilon,T;\mathbb{R}^{k})}+(b+c)\|z(\cdot)\|^{2}_{L_{\mathcal{F}^{W}}^{2,\mathrm{bmo}}(T-\varepsilon,T;\mathbb{R}^{k})}\\
&\quad+\left(\frac{\varepsilon}{a}\left\|\alpha(\cdot)\right\|^{2}_{L_{\mathcal{F}^{W}}^{2,\mathrm{bmo}}(0,T;\mathbb{R}^{k\times k})}+\frac{\varepsilon}{b}\left\|\beta(\cdot)\right\|^{2}_{L_{\mathcal{F}^{W}}^{\infty}(0,T;\mathbb{R}^{k\times k})}+\frac{1}{c}\left\|\gamma(\cdot)\right\|^{2}_{L_{\mathcal{F}^{W}}^{2,\mathrm{bmo}}(0,T;\mathbb{R}^{k\times k})}+\varepsilon\right)\|Y(\cdot)\|^{2}_{L_{\mathcal{F}^{W}}^{\infty}(T-\varepsilon,T;\mathbb{R}^{k})}.
\end{aligned}
\end{equation}
Since
\begin{align*}
\|\gamma(\cdot)\|_{L_{\mathcal{F}^{W}}^{2,\mathrm{bmo}}(0,T;\mathbb{R}^{k\times k})}<1,
\end{align*}
then there exists suitable $\varepsilon,a,b,c>0$ such that
\begin{equation}\label{varepsilon}
\begin{aligned}
&1-\frac{\varepsilon}{a}\left\|\alpha(\cdot)\right\|^{2}_{L_{\mathcal{F}^{W}}^{2,\mathrm{bmo}}(0,T;\mathbb{R}^{k\times k})}-\frac{\varepsilon}{b}\left\|\beta(\cdot)\right\|^{2}_{L_{\mathcal{F}^{W}}^{\infty}(0,T;\mathbb{R}^{k\times k})}-\frac{1}{c}\left\|\gamma(\cdot)\right\|^{2}_{L_{\mathcal{F}^{W}}^{2,\mathrm{bmo}}(0,T;\mathbb{R}^{k\times k})}-\varepsilon>0,\\
&\frac{a}{1-\frac{\varepsilon}{a}\left\|\alpha(\cdot)\right\|^{2}_{L_{\mathcal{F}^{W}}^{2,\mathrm{bmo}}(0,T;\mathbb{R}^{k\times k})}-\frac{\varepsilon}{b}\left\|\beta(\cdot)\right\|^{2}_{L_{\mathcal{F}^{W}}^{\infty}(0,T;\mathbb{R}^{k\times k})}-\frac{1}{c}\left\|\gamma(\cdot)\right\|^{2}_{L_{\mathcal{F}^{W}}^{2,\mathrm{bmo}}(0,T;\mathbb{R}^{k\times k})}-\varepsilon}<1,\\
&b+c<1.
\end{aligned}
\end{equation}
Therefore we denote
\begin{align*}
d=\max\left(\frac{a}{1-\frac{\varepsilon}{a}\left\|\alpha(\cdot)\right\|^{2}_{L_{\mathcal{F}^{W}}^{2,\mathrm{bmo}}(0,T;\mathbb{R}^{k\times k})}-\frac{\varepsilon}{b}\left\|\beta(\cdot)\right\|^{2}_{L_{\mathcal{F}^{W}}^{\infty}(0,T;\mathbb{R}^{k\times k})}-\frac{1}{c}\left\|\gamma(\cdot)\right\|^{2}_{L_{\mathcal{F}^{W}}^{2,\mathrm{bmo}}(0,T;\mathbb{R}^{k\times k})}-\varepsilon},b+c\right)
\end{align*}
and obtain
\begin{align*}
&\frac{a}{d}\|Y(\cdot)\|^{2}_{L_{\mathcal{F}^{W}}^{\infty}(T-\varepsilon,T;\mathbb{R}^{k})}+\|Z(\cdot)\|^{2}_{L_{\mathcal{F}^{W}}^{2,\mathrm{bmo}}(T-\varepsilon,T;\mathbb{R}^{k})}\\
&\leq d\left[\frac{a}{d}\|y(\cdot)\|^{2}_{L_{\mathcal{F}^{W}}^{\infty}(T-\varepsilon,T;\mathbb{R}^{k})}+\|z(\cdot)\|^{2}_{L_{\mathcal{F}^{W}}^{2,\mathrm{bmo}}(T-\varepsilon,T;\mathbb{R}^{k})}\right]+\|\xi\|^{2}_{L_{\mathcal{F}^{W}}^{\infty}(\Omega;\mathbb{R}^{k})}+\|\eta(\cdot)\|^{2}_{L_{\mathcal{F}^{W}}^{2,\mathrm{bmo}}(T-\varepsilon,T;\mathbb{R}^{k})}\\
&\leq\frac{1}{1-d}\left[\|\xi\|^{2}_{L_{\mathcal{F}^{W}}^{\infty}(\Omega;\mathbb{R}^{k})}+\|\eta(\cdot)\|^{2}_{L_{\mathcal{F}^{W}}^{2,\mathrm{bmo}}(T-\varepsilon,T;\mathbb{R}^{k})}\right],
\end{align*}
yielding $(y(\cdot),z(\cdot))\in\mathscr{B}_{\varepsilon}$. For $(y_{1}(\cdot),z_{1}(\cdot)),(y_{2}(\cdot),z_{2}(\cdot))\in\mathscr{B}_{\varepsilon}$, we denote
\begin{align*}
(Y_{1}(\cdot),Z_{1}(\cdot))=\Gamma(y_{1}(\cdot),z_{1}(\cdot)),~(Y_{2}(\cdot),Z_{2}(\cdot))=\Gamma(y_{2}(\cdot),z_{2}(\cdot))
\end{align*}
and 
\begin{align*}
&\Delta Y(\cdot)=Y_{1}(\cdot)-Y_{2}(\cdot),~\Delta Z(\cdot)=Z_{1}(\cdot)-Z_{2}(\cdot),\\
&\Delta y(\cdot)=y_{1}(\cdot)-y_{2}(\cdot),~\Delta z(\cdot)=z_{1}(\cdot)-z_{2}(\cdot).
\end{align*}
Similar to \eqref{bounds}, we can also obtain
\begin{align*}
&\|\Delta Y(\cdot)\|^{2}_{L_{\mathcal{F}^{W}}^{\infty}(T-\varepsilon,T;\mathbb{R}^{k})}+\|\Delta Z(\cdot)\|^{2}_{L_{\mathcal{F}^{W}}^{2,\mathrm{bmo}}(T-\varepsilon,T;\mathbb{R}^{k})}\\
&\leq a\|\Delta y(\cdot)\|^{2}_{L_{\mathcal{F}^{W}}^{\infty}(T-\varepsilon,T;\mathbb{R}^{k})}+(b+c)\|\Delta z(\cdot)\|^{2}_{L_{\mathcal{F}^{W}}^{2,\mathrm{bmo}}(T-\varepsilon,T;\mathbb{R}^{k})}\\
&\quad+\left(\frac{\varepsilon}{a}\left\|\alpha(\cdot)\right\|^{2}_{L_{\mathcal{F}^{W}}^{2,\mathrm{bmo}}(0,T;\mathbb{R}^{k\times k})}+\frac{\varepsilon}{b}\left\|\beta(\cdot)\right\|^{2}_{L_{\mathcal{F}^{W}}^{\infty}(0,T;\mathbb{R}^{k\times k})}+\frac{1}{c}\left\|\gamma(\cdot)\right\|^{2}_{L_{\mathcal{F}^{W}}^{2,\mathrm{bmo}}(0,T;\mathbb{R}^{k\times k})}\right)\|\Delta Y(\cdot)\|^{2}_{L_{\mathcal{F}^{W}}^{\infty}(T-\varepsilon,T;\mathbb{R}^{k})}.
\end{align*}
and thus
\begin{align*}
&\frac{a}{d}\|Y(\cdot)\|^{2}_{L_{\mathcal{F}^{W}}^{\infty}(T-\varepsilon,T;\mathbb{R}^{k})}+\|Z(\cdot)\|^{2}_{L_{\mathcal{F}^{W}}^{2,\mathrm{bmo}}(T-\varepsilon,T;\mathbb{R}^{k})}\\
&\quad\leq d\left[\frac{a}{d}\|y(\cdot)\|^{2}_{L_{\mathcal{F}^{W}}^{\infty}(T-\varepsilon,T;\mathbb{R}^{k})}+\|z(\cdot)\|^{2}_{L_{\mathcal{F}^{W}}^{2,\mathrm{bmo}}(T-\varepsilon,T;\mathbb{R}^{k})}\right].
\end{align*}
Now we have proven that BSDE \eqref{BSDE} has a unique solution $\left(Y(\cdot),Z(\cdot)\right)$ on $[T-\varepsilon,T]$ such that $\left(Y(\cdot),Z(\cdot)\right)\in L_{\mathcal{F}^{W}}^{\infty}(T-\varepsilon,T;\mathbb{R}^{k})\times L_{\mathcal{F}^{W}}^{2,\mathrm{bmo}}(T-\varepsilon,T;\mathbb{R}^{k})$. Following the same method, we can prove that BSDE \eqref{BSDE} has a unique solution $\left(Y(\cdot),Z(\cdot)\right)$ on $[T-2\varepsilon,T-\varepsilon]$ such that $\left(Y(\cdot),Z(\cdot)\right)\in L_{\mathcal{F}^{W}}^{\infty}(T-2\varepsilon,T-\varepsilon;\mathbb{R}^{k})\times L_{\mathcal{F}^{W}}^{2,\mathrm{bmo}}(T-2\varepsilon,T-\varepsilon;\mathbb{R}^{k})$. To conclude, we can prove the solvability of BSDE \eqref{BSDE} on [0,T].
\end{proof}

Now we are prepared to prove our second main result.

\begin{proof}[Proof\ of\ Theorem \ref{th-2}]
For each $i\in\mathcal{M}$ and any $t\in[0,T]$, we set
\begin{align*}
&\alpha(t,i)=A(t,i)-B(t,i)\Gamma(t,i),\\
&\beta(t,i)=C(t,i)-D(t,i)\left(R(t,i)+D(t,i)^{\top}P(t,i)D(t,i)\right)^{-1}\left(B(t,i)^{\top}P(t,i)+D(t,i)^{\top}P(t,i)C(t,i)\right),\\
&\gamma(t,i)=-D(t,i)\left(R(t,i)+D(t,i)^{\top}P(t,i)D(t,i)\right)^{-1}D(t,i)^{\top}\Lambda(t,i),\\
&\eta(t,i)=\Gamma(t,i)^{\top}\left(D(t,i)^{\top}P(t,i)\sigma(t,i)-R(t,i)r(t,i)\right)+Q(t,i)q(t,i)-P(t,i)b(t,i),\\
&\qquad\qquad-C(t,i)^{\top}P(t,i)\sigma(t,i)+\Lambda(t,i)\sigma(t,i),\\
&\xi(i)=G(i)g(i).
\end{align*}
Then we can rewrite BSDE \eqref{K} as
\begin{equation}\label{K'}
\left\{
\begin{array}{l}
\begin{aligned}
dK(t,i)=&-\left[\alpha(t,i)^{\top}K(t,i)+\beta(t,i)^{\top}L(t,i)+\gamma(t,i)^{\top}L(t,i)+\eta(t,i)+\textstyle\sum_{j=1}^{l}q_{ij}K(t,j)\right]dt\\
&+L(t,i)dW_{t},\quad t\in[0,T]
\end{aligned}\\
K(T,i)=\xi(i),\quad i\in\mathcal{M}.
\end{array}
\right.
\end{equation}
We define
\begin{align*}
&\underline{\alpha}(t)=
\begin{bmatrix}
    \alpha(t,1)            &           &\multicolumn{2}{c}{\text{\large 0}}\\
		                   &\alpha(t,2)&      &                                   \\
		                   &           &\ddots& \\
	\multicolumn{2}{c}{\text{\large 0}}&      &\alpha(t,l)
\end{bmatrix}
+
\begin{bmatrix}
	q_{11}I_{n}&q_{12}I_{n}&\cdots&q_{1l}I_{n}\\
	q_{21}I_{n}&q_{22}I_{n}&\cdots&q_{2l}I_{n}\\
	\vdots	   &\vdots     &\ddots&\vdots\\
	q_{l1}I_{n}&q_{l2}I_{n}&\cdots&q_{ll}I_{n}
\end{bmatrix}^{\top},\\
&\underline{\beta}(t)=
\begin{bmatrix}
    \beta(t,1)            &            &\multicolumn{2}{c}{\text{\large 0}}\\
		                  &\beta(t,2)  &      &                                   \\
		                  &            &\ddots& \\
	\multicolumn{2}{c}{\text{\large 0}}&      &\beta(t,l)
\end{bmatrix},\quad
\underline{\gamma}(t)=
\begin{bmatrix}
    \gamma(t,1)            &           &\multicolumn{2}{c}{\text{\large 0}}\\
		                   &\gamma(t,2)&      &                                   \\
		                   &           &\ddots& \\
	\multicolumn{2}{c}{\text{\large 0}}&      &\gamma(t,l)
\end{bmatrix},\\
&K(t)=
\begin{bmatrix}
    K(t,1)\\
    K(t,2)\\
    \vdots\\
    K(t,l)\\
\end{bmatrix},\quad
L(t)=
\begin{bmatrix}
    L(t,1)\\
    L(t,2)\\
    \vdots\\
    L(t,l)\\
\end{bmatrix},\quad
\underline{\eta}(t)=
\begin{bmatrix}
    \eta(t,1)\\
    \eta(t,2)\\
    \vdots\\
    \eta(t,l)\\
\end{bmatrix},\quad
\underline{\xi}=
\begin{bmatrix}
    \xi(1)\\
    \xi(2)\\
    \vdots\\
    \xi(l)\\
\end{bmatrix}
\end{align*}
and again rewrite BSDE \eqref{K'} as
\begin{equation}\label{K''}
\left\{
\begin{array}{l}
dK(t)=-\big[\underline{\alpha}(t)^{\top}K(t)+\underline{\beta}(t)^{\top}L(t)+\underline{\gamma}(t)^{\top}L(t)+\underline{\eta}(t)\big]dt+L(t)dW_{t},\quad t\in[0,T],\\
K(T)=\underline{\xi}.
\end{array}
\right.
\end{equation}
From \eqref{condition}, when $L_{\sigma}>0$ is sufficiently small, we have
\begin{align*}
\|\underline{\gamma}(\cdot)\|_{L_{\mathcal{F}^{W}}^{2,\mathrm{bmo}}(0,T;\mathbb{R}^{nl\times nl})}<1.
\end{align*}
From Lemma \ref{le-b}, BSDE \eqref{K''} has a unique solution $\left(K(\cdot),L(\cdot)\right)$ such that $\left(K(\cdot),L(\cdot)\right)\in L_{\mathcal{F}^{W}}^{\infty}(0,T;\mathbb{R}^{nl})\times L_{\mathcal{F}^{W}}^{2,\mathrm{bmo}}(0,T;\mathbb{R}^{nl})$, which finishes the proof.
\end{proof}

\subsection{Optimal control and optimal value}
Thanks to the solvability of SRE \eqref{Riccati} and BSDE \eqref{K}, we can now represent the optimal control and optimal value with their unique solutions and give the following proof.

\begin{proof}[Proof\ of\ Theorem \ref{th-3}]
From the proof of Lemma 4.2 in \cite{Hu et al. 2022b},  $u^{*}$ is clearly an admissible control. Without causing ambiguity, we will abbreviate $\psi(t,\alpha_{t})$ to $\psi$, $\psi=A,B,C,D,b,\sigma,Q,R,q,p,P,\Lambda,K,L,X,u$. Applying It\^o's formula to $t\mapsto \langle P(t,\alpha_{t})X(t),X(t))\rangle$ and $t\mapsto -2\langle K(t,\alpha_{t}),X(t))\rangle$, and taking the expectation, it holds that
\begin{equation}\label{PXX}
\begin{aligned}
&\mathbb{E}\left[\left\langle G(\alpha_{T})X(T),X(T)\right\rangle\right]-\left\langle P(0,i_{0})x,x\right\rangle\\
&\quad=2\mathbb{E}\left[\int_{0}^{T}\langle PBu+Pb+\Lambda Du+\Lambda \sigma+C^{\top}PDu+C^{\top}P\sigma,X\rangle dt\right]\\
&\qquad+\mathbb{E}\left[\int_{0}^{T}\langle(R+D^{\top}PD)^{-1}(B^{\top}P+D^{\top}PC+D^{\top}\Lambda)X,(B^{\top}P+D^{\top}PC+D^{\top}\Lambda)X\rangle dt\right]\\
&\qquad+\mathbb{E}\left[\int_{0}^{T}\langle P(Du+\sigma),Du+\sigma\rangle dt\right]-\mathbb{E}\left[\int_{0}^{T}\langle QX,X\rangle dt\right]
\end{aligned}
\end{equation}
and
\begin{equation}\label{KX}
\begin{aligned}
&-2\mathbb{E}\left[\left\langle G(\alpha_{T})g(\alpha_{T}),X(T)\right\rangle\right]+2\left\langle K(0,i_{0}),x\right\rangle\\
&\quad=-2\mathbb{E}\left[\int_{0}^{T}\langle(R+D^{\top}PD)^{-1}(B^{\top}P+D^{\top}PC+D^{\top}\Lambda)X,B^{\top}K+D^{\top}L-D^{\top}P\sigma+Rr\rangle dt\right]\\
&\qquad+2\mathbb{E}\left[\int_{0}^{T}\langle Qq-Pb-C^{\top}P\sigma-\Lambda\sigma,X\rangle dt\right]-2\mathbb{E}\left[\int_{0}^{T}\langle Bu+b,K\rangle dt\right]\\
&\qquad-2\mathbb{E}\left[\int_{0}^{T}\langle Du+\sigma,L\rangle dt\right].
\end{aligned}
\end{equation}
Now we denote
\begin{align*}
v(t,i)=&-\left(R(t,i)+D(t,i)P(t,i)D(t,i)^{\top}\right)^{-1}\\
&\cdot\left[\left(B(t,i)^{\top}P(t,i)+D(t,i)^{\top}P(t,i)C(t,i)+D(t,i)^{\top}\Lambda(t,i)\right)X\right.\\
&\quad+\left.D(t,i)^{\top}P(t,i)\sigma(t,i)-R(t,i)r(t,i)-B(t,i)^{\top}K(t,i)-D(t,i)^{\top}L(t,i)\right]
\end{align*}
and add \eqref{PXX} and \eqref{KX}, then we can obtain
\begin{align*}
&\mathbb{E}\left[\left\langle G(\alpha_{T})X(T),X(T)\right\rangle-2\left\langle G(\alpha_{T})g(\alpha_{T}),X(T)\right\rangle\right]-\left\langle P(0,i_{0})x,x\right\rangle+2\left\langle K(0,i_{0}),x\right\rangle\\
&\quad=2\mathbb{E}\left[\int_{0}^{T}\langle PBu+Pb+\Lambda Du+\Lambda \sigma+C^{\top}PDu+C^{\top}P\sigma+Qq-Pb-C^{\top}P\sigma-\Lambda\sigma,X\rangle dt\right]\\
&\qquad+\mathbb{E}\left[\int_{0}^{T}\langle(R+D^{\top}PD)^{-1}(B^{\top}P+D^{\top}PC+D^{\top}\Lambda)X,(B^{\top}P+D^{\top}PC+D^{\top}\Lambda)X\rangle dt\right]\\
&\qquad-2\mathbb{E}\left[\int_{0}^{T}\langle(R+D^{\top}PD)^{-1}(B^{\top}P+D^{\top}PC+D^{\top}\Lambda)X,B^{\top}K+D^{\top}L-D^{\top}P\sigma+Rr\rangle dt\right]\\
&\qquad+\mathbb{E}\left[\int_{0}^{T}\langle P(Du+\sigma)-2L,Du+\sigma\rangle dt\right]-\mathbb{E}\left[\int_{0}^{T}\langle QX,X\rangle dt\right]-2\mathbb{E}\left[\int_{0}^{T}\langle Bu+b,K\rangle dt\right]\\
&\quad=\mathbb{E}\left[\int_{0}^{T}\langle (R+D^{\top}PD)(u-v),u-v\rangle dt\right]-\mathbb{E}\left[\int_{0}^{T}\langle Ru,u-2r\rangle dt\right]-\mathbb{E}\left[\int_{0}^{T}\langle QX,X-2q\rangle dt\right]\\
&\qquad-\mathbb{E}\left[\int_{0}^{T}\langle (R+D^{\top}PD)^{-1}(D^{\top}P\sigma-Rr-B^{\top}K-D^{\top}L),D^{\top}P\sigma-Rr-B^{\top}K-D^{\top}L\rangle dt\right]\\
&\qquad+\mathbb{E}\left[\int_{0}^{T}\langle P\sigma,\sigma\rangle dt\right]-2\mathbb{E}\left[\int_{0}^{T}\langle b,K\rangle dt\right]-2\mathbb{E}\left[\int_{0}^{T}\langle\sigma,L\rangle dt\right],
\end{align*}
which implies
\begin{align*}
&\mathbb{E}\left[\left\langle G(\alpha_{T})(X(T)-g(\alpha_{T})),X(T)-g(\alpha_{T})\right\rangle\right]+\mathbb{E}\left[\int_{0}^{T}\big(\langle Q(X-q), X-q\rangle+\langle R(u-r), u-r\rangle\big)dt\right]\\
&\quad=\langle P(0,i_{0})x,x\rangle-2\langle K(0,i_{0}),x\rangle+\mathbb{E}[\langle G(\alpha_{T})g(\alpha_{T}),g(\alpha_{T})\rangle]\\
&\qquad+\mathbb{E}\left[\int_{0}^{T}\langle (R+D^{\top}PD)^{-1}(u-v),u-v\rangle dt\right]+\mathbb{E}\left[\int_{0}^{T}\langle Qq,q\rangle dt\right]+\mathbb{E}\left[\int_{0}^{T}\langle Rr,r\rangle dt\right]\\
&\qquad-\mathbb{E}\left[\int_{0}^{T}\langle (R+D^{\top}PD)^{-1}(D^{\top}P\sigma-Rr-B^{\top}K-D^{\top}L),D^{\top}P\sigma-Rr-B^{\top}K-D^{\top}L\rangle dt\right]\\
&\qquad+\mathbb{E}\left[\int_{0}^{T}\langle P\sigma,\sigma\rangle dt\right]-2\mathbb{E}\left[\int_{0}^{T}\langle b,K\rangle dt\right]-2\mathbb{E}\left[\int_{0}^{T}\langle\sigma,L\rangle dt\right].
\end{align*}
Since for any $t\in[0,T],~i\in\mathcal{M}$, we have
\begin{align*}
R(t,i)+D(t,i)^{\top}P(t,i)D(t,i)>0,
\end{align*}
then it holds that
\begin{align*}
&\mathbb{E}\left[\left\langle G(\alpha_{T})(X(T)-g(\alpha_{T})),X(T)-g(\alpha_{T})\right\rangle\right]+E\left[\int_{0}^{T}\big(\langle Q(X-q), X-q\rangle+\langle R(u-p), u-p\rangle\big)dt\right]\\
&\quad\geq\langle P(0,i_{0})x,x\rangle-2\langle K(0,i_{0}),x\rangle+\mathbb{E}\left[\langle G(\alpha_{T})g(\alpha_{T}),g(\alpha_{T})\rangle\right]\\
&\qquad-\mathbb{E}\left[\int_{0}^{T}\langle (R+D^{\top}PD)^{-1}(D^{\top}P\sigma-Rr-B^{\top}K-D^{\top}L),D^{\top}P\sigma-Rr-B^{\top}K-D^{\top}L\rangle dt\right]\\
&\qquad+\mathbb{E}\left[\int_{0}^{T}\big(\langle Qq,q\rangle+\langle Rr,r\rangle+\langle P\sigma,\sigma\rangle-2\langle b,K\rangle-2\langle \sigma,L\rangle\big)dt\right]
\end{align*}
and the equality holds when $u(t,i)=v(t,i)$. Now we obtain the optimal control and optimal value.
\end{proof}

\end{document}